\numberwithin{equation}{section}
\theoremstyle{plain}
\newtheorem{prop}{Proposition}[section]
\newtheorem{conj}[prop]{Conjecture}
\newtheorem{coro}[prop]{Corollary}
\newtheorem{lemm}[prop]{Lemma}
\newtheorem{ques}[prop]{Question}
\newtheorem*{claim}{Claim}
\newtheorem*{prop1}{Proposition~1}
\newtheorem*{prop2}{Proposition~2}
\theoremstyle{definition}
\newtheorem*{defi}{Definition}
\newtheorem*{rema*}{Remark}
\newtheorem{rema}[prop]{Remark}
\newtheorem{exam}[prop]{Example}
\newtheorem*{nota}{Notation} 
\newcommand\area{a}
\newcommand\BOLD[1]{\pmb{\boldsymbol{#1}}}
\newcommand\CC{C}
\newcommand\compl{\mathrm{compl}_{\rev}}
\newcommand\concat{\mathbin{{}^{\scriptscriptstyle\frown}}}
\newcommand\DD{D}
\newcommand\DDD{\mathcal{D}}
\newcommand\dist{\mathrm{dist}}
\newcommand\ea{a}
\newcommand\eb{b}
\newcommand\ec{c}
\newcommand\ed{d}
\newcommand\ee{e}
\newcommand\ew{\epsilon}
\newcommand\ff{f}
\newcommand\g{\gamma}
\let\ge=\geqslant
\newcommand\ie{{\it i.e.}}
\newcommand\ii{i}
\newcommand\II{I}
\newcommand\Int[1]{[\![1,#1]\!]}
\newcommand\Inv{I}
\newcommand\jj{j}
\newcommand\kk{k}
\newcommand\KKK{\mathcal{K}}
\let\le=\leqslant
\newcommand\mm{m}
\newcommand\NF{{\scriptstyle\mathrm{NF}}}
\newcommand\Nba{N}
\newcommand\Ndc{N'}
\newcommand\nn{n}
\newcommand\NN{N}
\newcommand\nno{{n-1}}
\renewcommand\P[2]{\{#1,#2\}}
\newcommand\Ps[2]{\scriptstyle\{#1,#2\}}
\newcommand\pp{p}
\newcommand\ppp{p'}
\newcommand\Q[4]{\{\{#1,#2\},\{#3,#4\}\}}
\newcommand\Qs[4]{\scriptstyle\{\!\{#1,#2\},\{#3,#4\}\!\}}
\newcommand\qq{q}
\newcommand\qqq{q'}
\newcommand\resp{{\it resp.} }
\newcommand\rev{ \curvearrowright}
\newcommand\revk[1]{\rev^{\!#1}}
\newcommand\rr{r}
\newcommand\Sep{\Sigma}
\newcommand\SEP[2]{\Sigma_{#1,#2}}
\renewcommand\ss[1]{s_{#1}}
\newcommand\sss[1]{\overline{s}_{#1}}
\renewcommand{\SS}{S}
\newcommand\Sym{\mathfrak{S}}
\newcommand\sym[1]{\overline{#1}}
\newcommand\T[3]{\{#1,#2,#3\}}
\newcommand\Ts[3]{\scriptstyle\{#1,#2,#3\}}
\newcommand\uu{u}
\newcommand\vv{v}
\newcommand\VV{V}
\newcommand\ww{w}
\begin{document}

\hfill{\tiny 2009-02}

\author{Marc AUTORD}
\address{M.A., Laboratoire de Math\'ematiques Nicolas
Oresme, Universit\'e de Caen, 14032 Caen, France}
\email{marc.autord@math.unicaen.fr}

\author{Patrick DEHORNOY}
\address{P.D. \hbox{\rm(corresponding author)},
Laboratoire de Math\'ematiques Nicolas Oresme,
Universit\'e de Caen, 14032 Caen, France}
\email{dehornoy@math.unicaen.fr}
\urladdr{//www.math.unicaen.fr/\!\hbox{$\sim$}dehornoy}

\title{On the distance between the expressions of a permutation}

\keywords{permutation, transposition, reduced expression, combinatorial distance, van
Kampen diagram, subword reversing, braid relation}

\subjclass{20B30, 05E15, 20F55, 20F36}

\begin{abstract}
We prove that the combinatorial distance between any two reduced expressions of a
given permutation of~$\{1, ..., \nn\}$ in terms of transpositions lies in~$O(\nn^4)$, a sharp bound. Using a connection with the intersection numbers of certain curves in van Kampen diagrams, we prove that this bound is sharp, and give a practical criterion for
proving that the derivations provided by the reversing algorithm of [Dehornoy, JPAA 116
(1997) 115-197] are optimal. We also show the existence of length~$\ell$ expressions 
whose reversing requires $\CC \ell^4$ elementary steps.
\end{abstract}

\maketitle

This paper is about the various ways of expressing a
permutation as a product of transpositions and the
complexity of transforming one such expression into
another. We consider both the absolute complexity
(``combinatorial distance''), which deals with the minimal
possible number of steps, and the more specific complexity
(``reversing complexity''), which arises when one uses
subword reversing, a certain prescribed strategy for
transforming  expressions.

Throughout the paper, we denote by~$\Int\nn$ the set~$\{1, 2, ..., \nn\}$, and by~$\ss\ii$
the transposition that exchanges~$\ii$ and~$\ii+1$. A well known result---see for
instance  \cite{Hum}---states that, if $\pi$ is any permutation of~$\Int\nn$ and $\uu,
\vv$ are any two reduced (\ie, minimal length) expressions of~$\pi$  in terms of~$\ss1,
...,
\ss\nno$, then one can transform~$\uu$ into~$\vv$ only using the braid relations
\begin{align}
\label{E:Braid1}
\tag{I}
\ss\ii \ss\jj \ss\ii &= \ss\jj \ss\ii \ss\jj
\mbox{\,\quad with $\vert \ii - \jj \vert = 1$},\\
\label{E:Braid2}
\tag{II}
\ss\ii \ss\jj &= \ss\jj \ss\ii
\mbox{\qquad with $\vert \ii - \jj \vert \ge 2$}.
\end{align}
In this context, we define the \emph{combinatorial distance}~$\dist(\uu, \vv)$ of~$\uu$
and $\vv$ to be the minimal number of braid relations needed to transform~$\uu$
into~$\vv$. The standard proof for the finiteness of~$\dist(\uu,\vv)$ relies on the so-called
Exchange Lemma of Coxeter groups, and it leads to an exponential upper bound
for $\dist(\uu, \vv)$ in terms of~$\nn$. The first aim of this paper is to establish a
polynomial upper bound, namely a sharp degree~$4$ one. Using
\emph{``$\nn$-expression''} as a shorthand for ``expression representing a permutation
of~$\Int\nn$'', \ie, involving letters about~$\ss1, ..., \ss\nno$ only, we prove

\begin{prop1}
\label{P:Main}
There exist positive constants~$\CC_1, \CC_2$ such that, for each~$\nn$, 

$\bullet$ all equivalent reduced $\nn$-expressions~$\uu,
\vv$ satisfy $\dist(\uu,
\vv) \le \CC_1\,\nn^4$, 

$\bullet$ there exist equivalent $\nn$-expressions~$\uu, \vv$ satisfying
$\dist(\uu, \vv) \ge \CC_2\, \nn^4$.
\end{prop1}

(The values $\CC_1 = 1/2$ and $\CC_2 = 1/8$ are valid for~$\nn$ large enough.)

The methods we use are geometrical. For the upper bound, we consider some area in the
$\nn$-strand braid diagram naturally associated with an $\nn$-expression. For the lower
bound, we consider van Kampen diagrams and introduce certain curves called separatrices,
which are associated with the names of the strands involved in the successive
crossings. 

The latter notion, which seems of independent interest, provides general
criteria for proving that a van Kampen diagram (\ie, in algebraic terms, a derivation by
braid relations) is possibly optimal, \ie, it involves the minimal number of braid 
relations. In particular, Proposition~\ref{P:Minimal} below states that a
sufficient condition for a van Kampen diagram to be optimal is that any
two separatrices cross at most once in it.

In the second part, we address similar complexity issues in the particular case of subword
reversing. This is a specific strategy that, given two equivalent expressions~$\uu, \vv$,
returns a derivation of~$\vv$ from~$\uu$ by means of braid relations, \ie, equivalently,
constructs a van Kampen diagram for the pair~$(\uu, \vv)$. We observe on a simple
counter-example that the reversing method need not be optimal, but we deduce from the
above approach based on separatrices a simple optimality criterion, namely
Proposition~\ref{P:OptimalBis} that states that a sufficient condition for the
reversing method to be optimal for some pair~$(\uu, \vv)$ is that the so-called reversing
diagram for~$(\uu, \vv)$ contains no digon, \ie, in algebraic terms, the reversing
sequence from~$\sym\uu \vv$ contains no $\ew$-step---all technical terms are
defined below.

Finally, we address the general question of the complexity of the reversing method.
Frustratingly, the only upper bound we can establish at the moment is exponential---this
does not contradict the polynomial upper bound of Proposition~1,
since reversing need not be optimal. On the other hand, the
optimal lower bound of Proposition~1 induces a similar
lower bound in the case of reversing. What is more
interesting is to consider the case of non-necessarily
equivalent expressions. In that case, the reversing method
still applies, and its complexity remains widely unknown.
The relevant question is to determine the
number~$\compl(\uu,\vv)$ of elementary steps when one
starts with expressions~$\uu, \vv$ of length~$\ell$
(independently from the index~$\nn$). When the lower
bound of Proposition~1 is translated in this language, it
leads to a quadratic lower bound
$\compl(\uu,\vv) \ge \CC \, \ell^2$. This value is far from optimal.

\begin{prop2}
\label{P:Main}
There exists a positive constant~$\CC_3$ such that, for each~$\ell$,

$\bullet$ there exist length~$\ell$ expressions~$\uu, \vv$ satisfying
$\compl(\uu, \vv) \ge \CC_3\, \ell^4$.
\end{prop2}

(For $\ell$ large enough, we can take $\CC_3 =4/3$.) At the moment, we do not know
whether the above result is optimal.

It is likely that most results of this paper extend to finite Coxeter groups of other type.
However, the arguments developed here heavily rely on specific properties of permutations,
so how extending to Coxeter types other than~A and~B is not
clear.

\begin{rema*}
Most constructions developed in this paper in the case of permutations and
their  expressions in terms of transpositions can be extended to the case of positive braid
and their decompositions in terms of Artin's generators~$\sigma_\ii$. Technically, the case
of permutations corresponds to the particular case of the so-called simple braids,  which are
the divisors of Garside's fundamental braid~$\Delta_\nn$ in the braid
monoid~$B_\nn^+$, see~\cite{Gar} or \cite[Chap. 9]{Eps}.
Our reason for choosing the language of permutations here is
that it is more widely accessible and it avoids introducing
the  general framework of braids whereas most results would
involve simple braids exclusively. Indeed, it turns out that
the worst cases known so far, in particular in terms of
subword reversing, always involve simple braids.
We have no explanation for this phenomenon.
\end{rema*}

We finally mention that the results of Section~1
are mainly due to the second author, whereas those of
Section~2 are mainly due to the first author.

\section{The combinatorial distance}

Hereafter, a word~$\uu$ on the alphabet $\{\ss1, ..., \ss\nno\}$ is generically called an
\emph{$\nn$-expression}. Two $\nn$-expressions are called \emph{equivalent} if they
represent the same permutation of~$\Int\nn$. Throughout the paper (in particular in view
of the braid diagrams considered below), it is convenient that the product, both for words
and for permutations, refers to reverse composition: $\uu\vv$ means ``$\uu$ first,
then~$\vv$''. An $\nn$-expression~$\uu$ is called \emph{reduced} if the permutation
represented by~$\uu$ has no expression that is shorter than~$\uu$. 

If $\uu$ and $\vv$ are equivalent reduced $\nn$-expressions, then, as recalled above,
one can transform~$\uu$ into~$\vv$ using the braid relations
of types~I and~II, and
we denote by~$\dist(\uu,\vv)$ the minimal number of braid
relations needed to do it. In this section, we establish bounds
for~$\dist(\uu, \vv)$ when
$\nn$ grows to infinity.

\subsection{An upper bound result}
\label{S:Upper}

We begin with an upper bound result. To this end, we introduce one
distinguished reduced expression, called~\emph{normal}, for each permutation, and we
define a strategy that transforms any reduced expression into the (unique) normal
expression that represents the same braid.

For each $\nn$-expression~$\uu$, we define~$\DD_\uu$ to be the $\nn$-strand braid
diagram obtained by associating with the letter~$\ss\ii$ the pattern
\begin{equation}
\label{E:Diagram}
\vcenter{\hsize = 50mm
\begin{picture}(50,9)
\put(0,0){\includegraphics{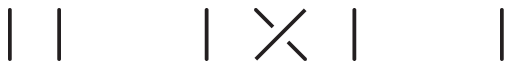}}
\put(-1,7){$1$}
\put(4,7){$2$}
\put(24,7){$\ii$}
\put(27,7){$\ii\!+\!1$}
\put(49,7){$\nn$}
\put(11,2){...}
\put(41,2){...}
\end{picture}}
\end{equation}
and by stacking from top to bottom the elementary patterns corresponding to the
successive letters of~$\uu$. When we speak of the \emph{$\pp$th strand} in~$\DD_\uu$,
we refer to the strand that starts at the $\pp$th position from the left on the top line. It is
well known that $\uu$ is a reduced expression if and only if any two strands
in~$\DD_\uu$ cross at most once~\cite[Chap{.}\,9]{Eps}.

\begin{defi}
Define $\ss{\jj, \ii}$ to be $\ss{\jj-1} \ss{\jj-2} ... \ss{\ii+1} \ss\ii$ for $\jj > \ii$,
and to be the empty word~$\ew$ for $\jj = \ii$. An $\nn$-expression is called
\emph{normal} if it has the form
\begin{equation}
\label{E:Normal}
\ss{1, \ff(1)} \, \ss{2, \ff(2)} \, ... \,  \ss{\nn, \ff(\nn)},
\end{equation}
 for some function~$\ff : \Int\nn \to \Int\nn$ satisfying $\ff(\ii)
\le \ii$ for each~$\ii$.
\end{defi} 

(The first factor $\ss{1, \ff(1)}$ is mentioned for symmetry, but is necessarily empty.)

\begin{lemm}
$(i)$ Every normal expression is reduced.

$(ii)$ Each permutation of~$\Int\nn$ admits a unique normal $\nn$-expression.
\end{lemm}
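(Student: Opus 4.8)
The plan is to prove the two assertions of the Lemma separately, using the braid diagram $\DD_\uu$ as the main tool, since reducedness is characterized there by the condition that any two strands cross at most once.

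\medskip

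For part~$(i)$, the strategy is to analyze the permutation represented by a normal expression~$\uu = \ss{1,\ff(1)}\cdots\ss{\nn,\ff(\nn)}$ and to check directly that no two strands of~$\DD_\uu$ cross twice. The key observation is that the block $\ss{\jj,\ff(\jj)} = \ss{\jj-1}\ss{\jj-2}\cdots\ss{\ff(\jj)}$, when read as a sequence of elementary patterns stacked top to bottom, has the effect of taking the strand currently in position~$\jj$ and sliding it leftward to position~$\ff(\jj)$, while shifting each strand in positions $\ff(\jj), \ff(\jj)+1, \dots, \jj-1$ one step to the right. So after processing blocks $1$ through~$\jj$, positions $1,\dots,\jj$ hold (some rearrangement of) the strands that started in positions $1,\dots,\jj$, and the strand from top-position~$\jj$ is now in position~$\ff(\jj)$. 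From this one reads off the permutation: I would show that for $\pp < \qq$, strands $\pp$ and $\qq$ cross exactly once if the strand originally at~$\pp$ ends up to the right of the strand originally at~$\qq$, and not at all otherwise — the point being that within each block a strand only moves monotonically (the incoming strand goes strictly left, the displaced strands each go one step right), so no pair can reverse their relative order more than once across the whole expression. Hence every pair crosses at most once, and $\uu$ is reduced.

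\medskip

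For part~$(ii)$, I would prove existence and uniqueness of the function~$\ff$. For \emph{existence}, given a permutation~$\pi$ of~$\Int\nn$, I define $\ff(\jj)$ inductively so that the normal expression sorts~$\pi$: concretely, think of building the diagram from the top and, going block by block for $\jj = \nn, \nn-1, \dots$ (or $\jj=1,2,\dots$, whichever bookkeeping is cleaner), choose $\ff(\jj)$ to be the position to which the appropriate strand must be routed; one checks $\ff(\jj)\le\jj$ holds automatically because of the monotone structure established in part~$(i)$, and that the resulting expression represents~$\pi$. A clean alternative is a counting argument: a function $\ff:\Int\nn\to\Int\nn$ with $\ff(\ii)\le\ii$ has exactly $\prod_{\ii=1}^\nn \ii = \nn!$ choices, which equals $\vert\Sym_\nn\vert$, so it suffices to show the map $\ff \mapsto (\text{permutation of the normal expression})$ is injective; then surjectivity, hence uniqueness, is free. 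For \emph{injectivity}, suppose $\ff \neq \ff'$ and let~$\jj$ be largest with $\ff(\jj)\neq\ff'(\jj)$. Looking at $\DD_\uu$ and $\DD_{\uu'}$, the blocks for indices $>\jj$ agree and (by the monotone-sliding analysis of part~$(i)$) the position reached by the top-position-$\jj$ strand is $\ff(\jj)$ in one diagram and $\ff'(\jj)$ in the other; since later blocks $\ss{\jj+1,\ff(\jj+1)},\dots$ act identically on both and cannot bring these two strands back into agreement, the two permutations differ. This gives injectivity, and the cardinality count closes the argument.

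\medskip

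The main obstacle I expect is making the ``monotone sliding'' description of how strands move through a block $\ss{\jj,\ff(\jj)}$ fully precise and then leveraging it cleanly for both the at-most-one-crossing claim in~$(i)$ and the injectivity claim in~$(ii)$ — this is the combinatorial heart of the Lemma, and getting the indices and the ``which strand is where after block~$\jj$'' bookkeeping exactly right (especially the interaction between the displaced strands and subsequent blocks) is where the real care is needed. Everything else is either the standard crossing criterion for reducedness, quoted from~\cite[Chap.\,9]{Eps}, or the routine $\nn!$ count.
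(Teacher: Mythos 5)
Your proof of part~$(i)$ is essentially the paper's: the paper's whole argument is the one-line observation that in a normal expression the $\ii$th strand crosses over no $\jj$th strand with $\jj>\ii$, so any two strands cross at most once, and your ``monotone sliding'' analysis of the block $\ss{\jj,\ff(\jj)}$ is exactly the expansion of that observation. The one point to assemble carefully is that a pair $\{\pp,\qq\}$ with $\pp<\qq$ can cross only inside block~$\qq$ (blocks of smaller index never reach position~$\qq$, and in every later block both strands are merely displaced one step to the right, and displaced strands never cross each other), after which ``at most once per block'' does give ``at most once overall''; you have all the ingredients for this. For part~$(ii)$ you take a genuinely different route. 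The paper reconstructs $\ff$ from $\pi_\ff$ explicitly, by induction on the largest index $\nn_\ff$ with $\ff(\nn_\ff)<\nn_\ff$: it reads off $\nn_\ff$ as the largest non-fixed point of~$\pi_\ff$ and $\ff(\nn_\ff)=\pi_\ff(\nn_\ff)$, peels off the last nonempty block, and recurses; this is constructive and yields an algorithm for computing the normal form. Your route --- injectivity of $\ff\mapsto\pi_\ff$ via the largest index~$\jj$ where $\ff$ and $\ff'$ differ, combined with the count $\prod_{\ii=1}^{\nn}\ii=\nn!$ --- is also correct: the key facts, which you state, are that blocks of index $<\jj$ never touch the $\jj$th strand, so it enters block~$\jj$ at position~$\jj$ and leaves it at position $\ff(\jj)$ resp.\ $\ff'(\jj)$, and that the blocks of index $>\jj$ induce the same bijection of positions in both diagrams, so the two final positions remain distinct. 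Your version is non-constructive for existence but has the small advantage of making the existence half of the statement explicit, which the paper's proof (which only shows that $\pi_\ff$ determines~$\ff$) leaves implicit.
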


\begin{proof}
$(i)$ Assume that $\uu$ has the form~\eqref{E:Normal}. Then, for each~$\ii$, the
$\ii$th strand crosses over no $\jj$th strand with $\jj > \ii$ in the diagram~$\DD_\uu$.
Therefore, any two strands cross at most once in~$\DD_\uu$, so $\uu$ is a reduced
expression.

$(ii)$ For $\ff$ satisfying $\ff(\ii) \le \ii$ for each~$\ii$, let $\nn_\ff$ be the
largest~$\mm$ satisfying $\ff(\mm) < \mm$, if any, and $0$ otherwise, and  let $\pi_\ff$
be the permutation represented by the expression~\eqref{E:Normal} associated
with~$\ff$. We prove that $\pi_\ff$ determines~$\ff$ using induction on~$\nn_\ff$.
For $\nn_\ff = 0$, the only possibility is
that $\ff(\ii) = \ii$ holds for each~$\ii$, so the permutation~$\pi_\ff$ is the identity.
Assume now $\nn_\ff \ge 1$. By construction, we have $\pi_\ff(\ii) = \ii$ for $\ii \ge
\nn_\ff$, and
$\pi_\ff(\nn_\ff) = \ff(\nn_\ff)$. Hence $\pi_\ff$ determines~$\nn_\ff$
and~$\ff(\nn_\ff)$. Next, let $\ff'$ be defined by
$\ff'(\ii) = \ff(\ii)$ for $\ii < \nn_\ff$, and
$\ff'(\ii) = \ii$ for $\ii \ge \nn_\ff$. Then we have $\pi_{\ff'}  = \pi_\ff \ss{\nn_\ff,
\ff(\nn_\ff)}$, hence $\pi_\ff$ determines~$\pi_{\ff'}$. By construction, we have
$\nn_{\ff'} \le \nn_\ff - 1$. By induction hypothesis, $\pi_{\ff'}$ determines~$\ff'$,
hence so does~$\pi_\ff$. Finally, $\ff$ is determined by~$\ff', \nn_\ff$, and
$\ff(\nn_\ff)$, hence by~$\pi_\ff$.
\end{proof}

For each (reduced) expression~$\uu$, we denote by~$\NF(\uu)$ the unique normal
expression that is equivalent to~$\uu$. We shall now define a strategy for
transforming~$\uu$ into~$\NF(\uu)$. 

First, we concentrate on the last factor of the normal form. We have associated with every
$\nn$-expression~$\uu$ a braid diagram~$\DD_\uu$. We shall assume that the
pattern~\eqref{E:Diagram} is drawn in a rectangle that has width~$\nno$ and height~$1$.
So, if $\uu$ is an $\nn$-expression of length~$\ell$, the diagram~$\DD_\uu$ is
drawn in an $(\nno) \times \ell$ grid. It includes $(\nno) \ell$ squares of
size~$1$, and it makes sense to count how many such squares lie on the left or on the
right of a given strand.

\begin{lemm}
\label{L:Area}
For each $\nn$-expression~$\uu$, define $\area(\uu)$ to be the number of plain
squares lying on the right of the $\nn$th strand in the diagram~$\DD_\uu$. Then, for
each reduced $\nn$-expression~$\uu$, there exists an equivalent reduced
expression~$\vv \ss{\nn, \kk}$, with $\vv$ an  $(\nno)$-expression, satisfying
\begin{equation}
\label{E:Area}
\dist(\uu, \vv \ss{\nn,\kk}) \le \area(\uu).
\end{equation}
\end{lemm}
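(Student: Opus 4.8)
The plan is to argue by induction on the non‑negative integer $\area(\uu)$. The point is that $\area(\uu)$ depends only on the trajectory of the $\nn$th strand in~$\DD_\uu$: writing $\pp_\rr$ for the position of that strand just below the $\rr$th crossing of~$\DD_\uu$, with $\pp_0=\nn$, one readily checks that the number of squares lying to the right of the $\nn$th strand in the $\rr$th row equals $\nn-\max(\pp_{\rr-1},\pp_\rr)$, so that $\area(\uu)=\sum_\rr\bigl(\nn-\max(\pp_{\rr-1},\pp_\rr)\bigr)$. Since each term is non‑negative, $\area(\uu)=0$ forces $\max(\pp_{\rr-1},\pp_\rr)=\nn$ for every~$\rr$; as $\pp_0=\nn$ and, $\uu$ being reduced, $\ss{\nn-1}$ cannot occur twice consecutively and the $\nn$th strand cannot leave and later return to position~$\nn$ (a strand that is rightmost at two heights is rightmost in between, otherwise some strand would cross it twice), this means the $\nn$th strand stays at position~$\nn$ save possibly for a single crossing in the bottom row. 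Equivalently, $\uu$ itself already has the form $\vv\,\ss{\nn,\kk}$ with $\vv$ an $(\nno)$‑expression, so the case $\area(\uu)=0$ is subsumed in the first alternative below.

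For the inductive step I would split into two cases. If $\uu$ has the form $\vv\,\ss{\nn,\kk}$ with $\vv$ an $(\nno)$‑expression, then \eqref{E:Area} holds trivially with $\dist(\uu,\vv\,\ss{\nn,\kk})=0$. Otherwise, the heart of the matter is the claim that \emph{there is a braid relation, of type~I or of type~II, transforming $\uu$ into an expression~$\uu'$ with $\area(\uu')=\area(\uu)-1$.} Granting this, $\uu'$ is again reduced, since a braid relation changes neither the represented permutation nor the length and therefore preserves reducedness; so the induction hypothesis applies to~$\uu'$ and yields an $(\nno)$‑expression~$\vv$ and an index~$\kk$ with $\uu'$ equivalent to $\vv\,\ss{\nn,\kk}$ and $\dist(\uu',\vv\,\ss{\nn,\kk})\le\area(\uu')$. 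Then $\uu$ is equivalent to $\vv\,\ss{\nn,\kk}$ as well and $\dist(\uu,\vv\,\ss{\nn,\kk})\le 1+\area(\uu')=\area(\uu)$, which is~\eqref{E:Area}.

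To establish the claim, I would look for the required relation at the foot of the trajectory of the $\nn$th strand. Let $R$ be the row carrying the last crossing that involves the $\nn$th strand; it exists because $\area(\uu)\ge1$ forces at least one occurrence of~$\ss{\nn-1}$. If $R$ is not the bottom row, then row $R{+}1$ carries a letter $\ss\mm$ not involving the $\nn$th strand, and a short case analysis—on whether the $\nn$th strand moves left or right at row~$R$, and on whether $\ss\mm$ commutes with the row‑$R$ letter (apply the corresponding type~II relation to rows $R$ and $R{+}1$) or is adjacent to it (then apply a type~I relation to rows $R,R{+}1,R{+}2$)—produces a braid relation that pulls the last crossing of the $\nn$th strand one row lower and removes exactly one square from the region to its right. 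If instead $R$ is the bottom row but $\uu$ is still not of the desired form, then higher up the trajectory has either a flat step (a row between two of its crossings not involving the $\nn$th strand) or a rightward step; at the topmost such place the same local analysis applies—either pushing a distant letter past the partial staircase by a type~II relation, or applying a type~I relation at a pattern $\ss\ii\ss\jj\ss\ii$ with $|\ii-\jj|=1$—and again lowers $\area$ by one.

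The main obstacle is precisely this last case analysis: one must verify that whenever $\uu$ is not yet of the form $\vv\,\ss{\nn,\kk}$, \emph{some} applicable braid relation does decrease $\area$. The delicate points are (i) using reducedness to guarantee that the needed local patterns occur—especially the three‑letter windows required for type~I moves—and to exclude pathological returns of the $\nn$th strand to position~$\nn$; (ii) the dichotomy between ``consecutive letters far apart'', handled by a two‑row type~II swap, and ``consecutive letters adjacent'', which forces a three‑row type~I move; and (iii) checking in each case that the orientation of the relation one chooses lowers $\area$ rather than raising it, the opposite orientation doing the latter. The remaining ingredients—the distance bookkeeping, the stability of reducedness under braid relations, and the treatment of the case $\area(\uu)=0$—are routine.
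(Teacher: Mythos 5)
Your overall architecture matches the paper's: reduce the lemma to the single claim that, whenever $\uu$ is reduced and not already of the form $\vv\,\ss{\nn,\kk}$, some braid relation produces an equivalent reduced $\uu'$ with strictly smaller $\area$, and then iterate. Your base-case discussion and the bookkeeping $\dist(\uu,\vv\ss{\nn,\kk})\le 1+\area(\uu')$ are fine (note only that the decrease can be by $2$ rather than exactly $1$, which is harmless). The problem is that the claim itself is exactly the content of the lemma, and your proposed way of locating the decreasing relation does not work as described. You localize at the \emph{last} crossing of the $\nn$th strand, in row~$R$, and when the letter in row~$R{+}1$ is adjacent to the one in row~$R$ you propose ``a type~I relation to rows $R,R{+}1,R{+}2$.'' The three-letter pattern $\ss\ii\ss\jj\ss\ii$ needed for a type~I move need not be present there: for $\nn=5$ and $\uu=\ss4\ss3\ss1\ss2\ss3$ (which is reduced and not of the required form), the last crossing of the fifth strand is the letter $\ss2$ in row~$4$, row~$5$ carries the adjacent letter $\ss3$, there is no row~$6$, and rows $3,4,5$ read $\ss1\ss2\ss3$, so no type~I relation applies anywhere near your chosen location. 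Moreover your fallback (``topmost flat or rightward step'') is ambiguous about orientation, and the orientation matters: in the same example, commuting rows $3$ and $4$ raises $\area$ from $8$ to $10$, while commuting rows $2$ and $3$ lowers it to $7$. You explicitly flag this case analysis as the main obstacle, and it is: as written, the argument has a hole precisely where the work is.

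The paper closes this hole by choosing a different, canonical location. Write $\uu=\vv\,\ss{\nn,\ii}\,\ss\jj\,\ww$ where $\vv$ is the maximal $(\nno)$-prefix, $\ss{\nn,\ii}=\ss{\nn-1}\cdots\ss\ii$ is the \emph{first} maximal block of crossings in which the $\nn$th strand is the front strand, and $\ss\jj$ is the very next letter. Maximality of the block gives $\jj\ne\ii-1$, and reducedness gives $\jj\ne\ii$ (the $\nn$th strand would cross the same strand twice), so only two cases remain, and in each the needed pattern is \emph{guaranteed} to be present: if $\vert\jj-\ii\vert\ge2$ a commutation yields $\vv\,\ss{\nn,\ii+1}\,\ss\jj\,\ss\ii\,\ww$ and $\area$ drops by $1$; if $\jj=\ii+1$ the block necessarily ends in $\ss{\ii+1}\ss\ii$, so the pattern $\ss{\ii+1}\ss\ii\ss{\ii+1}$ sits at the junction and a type~I move yields $\vv\,\ss{\nn,\ii+2}\,\ss\ii\ss{\ii+1}\ss\ii\,\ww$, dropping $\area$ by $2$. (Incidentally, in a reduced diagram the $\nn$th strand never moves rightward --- it starts rightmost, so a rightward move would force a double crossing --- which simplifies matters further; your analysis treats ``rightward steps'' as a live possibility.) I would encourage you to redo your case analysis anchored at the top of the staircase rather than at its foot; that is the missing idea.
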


For an induction it is enough to establish

\begin{claim}
If $\uu$ is not of the form $\vv \ss{\nn,\kk}$ with $\vv$ an $(\nno)$-expression, there
exists an $\nn$-expression~$\uu'$ satisfying
$\dist(\uu, \uu') = 1$ and  $\area(\uu') < \area(\uu)$.
\end{claim}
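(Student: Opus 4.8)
The plan is to produce the claimed single braid move by a local analysis of the diagram $\DD_\uu$ near the bottom. First I would set up coordinates: write $\uu = \ss{\ii_1}\ss{\ii_2}\cdots\ss{\ii_\ell}$, so that the last crossing in $\DD_\uu$ involves the columns $\ii_\ell$ and $\ii_\ell+1$, and follow the $\nn$th strand (the one entering at the rightmost top position) down through the diagram. The hypothesis that $\uu$ is \emph{not} of the form $\vv\,\ss{\nn,\kk}$ with $\vv$ an $(\nno)$-expression says precisely that, reading the letters of~$\uu$ from the bottom, the maximal suffix of the form $\ss{\nn-1}\ss{\nn-2}\cdots\ss{\kk}$ that can be peeled off does \emph{not} account for the whole way the $\nn$th strand reaches its final position; equivalently, after peeling off that maximal ``staircase'' suffix, the remaining word still moves the $\nn$th strand, i.e.\ there is a letter $\ss{j}$ somewhere before that suffix which lies on the path of the $\nn$th strand. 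I would isolate the \emph{last} crossing on the $\nn$th strand, say it is the $\mm$th letter of~$\uu$, and look at the letter $\ss{\ii_{\mm+1}}$ immediately after it (which exists, or else $\uu$ would already have the forbidden-to-fail form): by the choice of~$\mm$, the $\nn$th strand does not cross at position~$\mm+1$, so either $\ii_{\mm+1}=\ii_\mm-1$ (the strand just to the left of the $\nn$th strand's current column crosses) or $\vert \ii_{\mm+1}-\ii_\mm\vert\ge 2$.

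The key step is then a case split on this local picture, applying one braid relation at position~$\mm$, $\mm+1$ (and $\mm+2$ in the type-I case). In the far-commutation case $\vert\ii_{\mm+1}-\ii_\mm\vert\ge 2$, I apply relation~(II) to the factor $\ss{\ii_\mm}\ss{\ii_{\mm+1}}$, i.e.\ swap these two letters; this is one move ($\dist=1$) and it pushes the last crossing of the $\nn$th strand one step later in the word. In the braiding case $\ii_{\mm+1}=\ii_\mm\pm1$, I use relation~(I): the subword $\ss{\ii_\mm}\ss{\ii_\mm\pm1}\ss{\ii_\mm}$ would be needed, and here the hypothesis that $\uu$ is reduced (any two strands cross at most once) guarantees that the third letter needed is indeed present and in the right place — because the two neighbouring strands, having already crossed, cannot cross the $\nn$th strand in the ``wrong order'' without creating a double crossing — so I rewrite $\ss{a}\ss{a+1}\ss{a}\to\ss{a+1}\ss{a}\ss{a+1}$ (or its mirror), again one move, and again the effect is to delay the last crossing of the $\nn$th strand. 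In every case, ``the right of the $\nn$th strand'' gains exactly the unit squares that used to sit in the column immediately left of the crossing and loses none, OR one reorganizes so that the $\nn$th strand moves strictly rightward in that local rectangle; I would verify by direct inspection of~\eqref{E:Diagram} that the number of unit squares strictly to the right of the $\nn$th strand drops by at least one, giving $\area(\uu')<\area(\uu)$.

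The main obstacle I expect is the bookkeeping in the type-I case: one must be sure that the letter required to complete the braid triple $\ss{a}\ss{a\pm1}\ss{a}$ is actually available as the \emph{next} letter and not blocked by an intervening letter $\ss{b}$ with $b$ far from $a$. This is where reducedness is essential and where the geometry has to be turned into a clean combinatorial statement: I would phrase it as ``the first letter after position~$\mm$ that again touches column $\ii_\mm$ or $\ii_\mm\pm1$ is the one completing the triple'', argue that any letter strictly between would either be irrelevant (far commutation, handled by first doing type-II moves, possibly several, each strictly decreasing a secondary quantity such as that letter's distance from the $\nn$th strand) or would force a forbidden double crossing. Once this local normal-form-near-the-bottom lemma is in place, the area count is routine from the picture, and the Claim follows; the enclosing Lemma~\ref{L:Area} then comes by iterating the Claim at most $\area(\uu)$ times, each iteration being legitimate because the intermediate expressions stay reduced (their diagrams still have all strands crossing at most once, the braid relations preserving this).
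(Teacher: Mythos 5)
There is a genuine gap: your localization of the braid move at the \emph{last} crossing of the $\nn$th strand does not work, and your parenthetical claim that the letter immediately after that crossing must exist unless $\uu$ already has the required form is false. Take $\nn=5$ and $\uu=\ss4\ss3\ss1\ss2$: this is reduced (its crossings are $\P45$, $\P35$, $\P12$, $\P15$), it is not of the form $\vv\,\ss{5,\kk}$ with $\vv$ a $4$-expression (no nonempty suffix has the staircase form $\ss4\ss3\cdots\ss\kk$, and $\uu$ itself contains $\ss4$), yet the last crossing of the $5$th strand is the final letter $\ss2$, so there is no letter after it and your case analysis never starts. (Your dichotomy is also miswritten: since the $\nn$th strand sits at column $\ii_\mm$ after its last crossing, a following letter $\ss{\ii_\mm-1}$ \emph{would} cross it again, so the correct alternatives are $\ii_{\mm+1}=\ii_\mm+1$ or $\vert\ii_{\mm+1}-\ii_\mm\vert\ge2$.) A second, independent gap is the one you yourself flag: in the braiding case nothing guarantees that the third letter completing the triple $\ss{a}\ss{a+1}\ss{a}$ is adjacent, and your proposed repair --- performing ``possibly several'' preparatory type~II moves governed by a secondary quantity --- yields $\dist(\uu,\uu')>1$ and intermediate moves that do not decrease~$\area$, i.e.\ it proves something weaker than the Claim, which demands a \emph{single} relation strictly decreasing the area.

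The paper sidesteps both problems by localizing at the \emph{first} descent of the $\nn$th strand instead of its last crossing: write $\uu=\vv\,\ss{\nn,\ii}\,\ss\jj\,\ww$, where $\vv$ is the maximal prefix that is an $(\nno)$-expression and $\ss{\nn,\ii}=\ss{\nn-1}\ss{\nn-2}\cdots\ss\ii$ is the maximal consecutive staircase that follows it. The hypothesis guarantees that the letter $\ss\jj$ exists; maximality of the run gives $\jj\ne\ii-1$; reducedness excludes $\jj=\ii$; and --- this is exactly what your version lacks --- the letters needed for the braid relation are adjacent \emph{by construction}: for $\vert\jj-\ii\vert\ge2$ one swaps the adjacent pair $\ss\ii\ss\jj$, and for $\jj=\ii+1$ the pattern $\ss{\ii+1}\ss\ii\ss{\ii+1}$ is present because $\ss{\ii+1}$ is the preceding letter of the run. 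In each case the $\nn$th strand moves strictly to the right in the local picture, so $\area$ drops by $1$ or $2$. On the counterexample above this produces $\uu'=\ss4\ss1\ss3\ss2$ in one type~II move.
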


\begin{proof}
Let $\pp$ be the final position of the $\nn$th strand in~$\DD_\uu$. The hypothesis
implies  $\pp < \nn$ holds as, otherwise, $\uu$ itself would be an $(\nno$)expression and
it could be expressed as $\uu\ss{\nn,\nn}$. Then there exists a unique decomposition
$$\uu = \vv \, \ss{\nn, \ii} \, \ss\jj \, \ww$$
with $\vv$ an $(\nno)$-expression, $\ii < \nn$, and $\jj \not= \ii-1$: we consider the
first block of crossings $\ss{\nn, \ii}$ in which the $\nn$th strand is the front strand,
and the hypothesis on~$\uu$ means that, after that block, there still remains at least one
crossing~$\ss\jj$ in which the $\nn$th strand is not the front strand, which means $\jj
\not= \ii-1$. We consider the various possible values of~$\jj$ with respect to~$\ii$. First,
$\jj = \ii -1$ is excluded by hypothesis, whereas $\jj = \ii$ would contradicts the
hypothesis that $\uu$ is reduced since the $\nn$th strand would cross the same strand
twice. There remain two cases only.

\noindent
Case 1: $\vert \jj - \ii \vert \ge 2$. Put $\uu' = \vv \ss{\nn, \ii+1} \ss\jj
\ss\ii \ww$. Then we have $\dist(\uu, \uu')\nobreak =\nobreak 1$, and
$\area(\uu') =
\area(\uu) - 1$, as shown in the following diagrams, which compare the contributions of
the factors $\ss\ii
\ss\jj$ and $\ss\jj \ss\ii$ to the right hand side area of the $\nn$th strand (the $\nn$th
strand is in bold, and the squares contributing to~$\area$ are in grey)
\begin{center}
\begin{picture}(60,25)
\put(0,0){\includegraphics{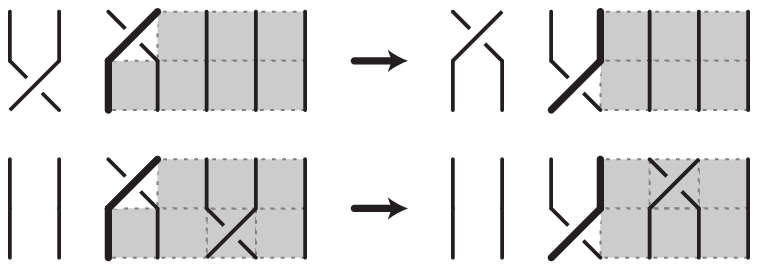}}
\put(-30,19){case $\jj \le \ii - 2$:}
\put(-30,4){case $\jj \ge \ii + 2$:}
\put(19,-3){$\jj$}
\put(9,-3){$\ii$}
\put(-1,12){$\jj$}
\put(9,12){$\ii$}
\end{picture}
\end{center}

\noindent
Case 2: $\jj = \ii + 1$. By construction this may happen only for $\ii \le \nn- 2$. Let $\uu'
= \vv \ss{\nn, \ii+2} \ss\ii \ss\jj \ss\ii \ww$. Then we have again $\dist(\uu,
\uu')\nobreak =\nobreak 1$, and $\area(\uu') =
\area(\uu) - 2$, as shown in the diagram
\begin{center}
\begin{picture}(60,15)
\put(0,0){\includegraphics{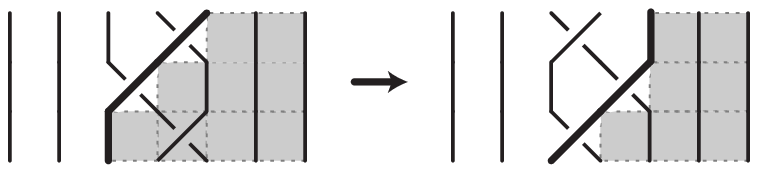}}
\put(-30,7){case $\jj = \ii + 1$:}
\put(14,-3){$\jj$}
\put(9,-3){$\ii$}
\end{picture}
\end{center}

So the proof of the claim is complete, and the lemma follows.
\end{proof}

Repeated uses of Lemma~\ref{L:Area} lead to

\begin{lemm}
\label{L:NF}
For every reduced $\nn$-expression~$\uu$ of length~$\ell$, we have
\begin{equation}
\label{E:NF}
\dist(\uu, \NF(\uu)) \le \nn(\nno) \ell / 2.
\end{equation}
\end{lemm}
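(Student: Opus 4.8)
The plan is to iterate Lemma~\ref{L:Area}, peeling off the strands from top to bottom. Starting with a reduced $\nn$-expression~$\uu$ of length~$\ell$, Lemma~\ref{L:Area} provides an equivalent reduced expression $\vv_1\,\ss{\nn,\kk_1}$ with $\vv_1$ an $(\nno)$-expression, at distance at most $\area(\uu)$ from~$\uu$; here $\vv_1$ is itself reduced, since shortening~$\vv_1$ would, after reappending $\ss{\nn,\kk_1}$, yield a shorter expression equivalent to~$\uu$. Applying Lemma~\ref{L:Area} to~$\vv_1$ splits off a block $\ss{\nno,\kk_2}$, and so on; after $\nno$ steps all letters have been consumed and one is left with the expression $\ss{2,\kk_{\nno}}\,\ss{3,\kk_{\nno-1}}\cdots\ss{\nn,\kk_1}$. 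Since each $\ss{\mm,\kk}$ is only defined for $\kk\le\mm$, this expression has the form~\eqref{E:Normal}, so it is a normal expression equivalent to~$\uu$; by the uniqueness statement proved above it must therefore be $\NF(\uu)$.

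To control the distance I would run the triangle inequality along the chain $\uu \to \vv_1\ss{\nn,\kk_1} \to \vv_2\ss{\nno,\kk_2}\ss{\nn,\kk_1} \to \cdots \to \NF(\uu)$. The key point is that the braid relations transforming~$\vv_\jj$ into its split form involve only the letters of~$\vv_\jj$, hence the same relations transform $\vv_\jj\,\sigma$ into $\vv_{\jj+1}\,\sigma'$ for any suffix~$\sigma$ already accumulated on the right; so the $\jj$th step of the chain costs at most the area of the top strand in the $(\nn-\jj+1)$-strand diagram of~$\vv_{\jj-1}$. Along the way one checks that every intermediate expression stays reduced — again by a length-after-reappending argument — so equivalent expressions have equal length and the lengths in the successive diagrams add up exactly; in particular all these lengths are $\le\ell$.

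It then remains to bound each of these areas crudely. The $\mm$-strand diagram of a length~$\ell'$ expression consists of exactly $(\mm-1)\ell'$ unit squares, so the area of any strand in it is at most $(\mm-1)\ell'\le(\mm-1)\ell$. Summing the $\nno$ contributions, for $\mm$ running from~$\nn$ down to~$2$, gives $\dist(\uu,\NF(\uu))\le\sum_{\mm=2}^{\nn}(\mm-1)\ell=\nn(\nno)\ell/2$, which is exactly the asserted bound.

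I do not expect a genuine obstacle: the argument is a bookkeeping exercise resting on Lemma~\ref{L:Area} and the uniqueness of the normal expression. The only points deserving explicit care are (a) that each intermediate expression remains reduced, so that lengths behave additively and stay $\le\ell$; (b) that braid relations localized in a prefix extend verbatim to the whole word; and (c) that the iteration really lands on the normal form, which needs both uniqueness and the constraint $\kk\le\mm$ built into the definition of~$\ss{\mm,\kk}$. The mild surprise worth flagging is that the entirely naive area bound already telescopes to exactly $\nn(\nno)\ell/2$, with no sharpening required.
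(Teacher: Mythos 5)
Your argument is correct and is essentially the paper's own proof: the paper runs the same peeling-off of the rightmost strand via Lemma~\ref{L:Area}, bounds each stage by the crude area estimate $(\mm-1)\ell$, and invokes uniqueness of the normal form, merely packaging the telescoping sum as an induction on~$\nn$ with the identity $(\nno)\ell + (\nn-1)(\nn-2)\ell/2 = \nn(\nno)\ell/2$. The side points you flag (reducedness of the intermediate expressions, relations in a prefix extending to the whole word) are correct and are left implicit in the paper.
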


\begin{proof}
We use induction on~$\nn$. The result is obvious for $\nn \le 3$. Assume $\nn \ge 4$.
Assume that the last factor in~$\NF(\uu)$ is $\ss{\nn, \kk}$. By Lemma~\ref{L:Area},
there exists a reduced $(\nno)$-expression~$\vv$ of length at most~$\ell$---actually
of length exactly $\ell - (\nn -
\kk)$---satisfying
$\dist(\uu, \vv \ss{\nn, \kk}) \le \area(\uu)$. By construction, we have
$\area(\uu) \le (\nn-1)\ell$. On the other hand, the uniqueness of the normal form implies
$\NF(\uu)  =
\NF(\vv)
\ss{\nn, \kk}$. Hence, using the induction hypothesis, we find
$$\dist(\uu, \NF(\uu)) \le (\nno)\ell + (\nn-1)(\nn-2)\ell/2 = \nn(\nno)\ell/2.
\eqno{\square}$$
\def\qed{\relax}
\end{proof}

We immediately deduce

\begin{prop}
\label{P:Upper}
For all equivalent $\nn$-expressions~$\uu, \vv$ of length~$\ell$, we have
\begin{equation}
\label{E:Dist}
\dist(\uu, \vv) \le (\nn-1)(\nn-2) \ell.
\end{equation}
\end{prop}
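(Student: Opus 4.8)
The plan is to reduce the general bound to the already-established normal-form estimate of Lemma~\ref{L:NF} via the triangle inequality for~$\dist$. The first observation is that $\dist$ is a genuine metric on each equivalence class of reduced expressions: it is symmetric because every braid relation is its own inverse, and it satisfies the triangle inequality because a derivation from~$\uu$ to~$\ww$ followed by one from~$\ww$ to~$\vv$ is a derivation from~$\uu$ to~$\vv$. So for equivalent reduced $\nn$-expressions~$\uu,\vv$ we may route through their common normal form: since $\uu$ and $\vv$ are equivalent, $\NF(\uu) = \NF(\vv)$, and hence
\begin{equation*}
\dist(\uu,\vv) \le \dist(\uu,\NF(\uu)) + \dist(\NF(\vv),\vv).
\end{equation*}

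Next I would apply Lemma~\ref{L:NF} to each term. Both $\uu$ and $\vv$ have length~$\ell$ (being reduced and equivalent, they have the same length, namely the Coxeter length of the common permutation), so each of the two terms is bounded by $\nn(\nno)\ell/2$, giving $\dist(\uu,\vv) \le \nn(\nno)\ell$. This is slightly weaker than the stated bound $(\nn-1)(\nn-2)\ell$, so one more small gain is needed: the induction in Lemma~\ref{L:NF} is already run from $\nn \le 3$, where the diagram~$\DD_\uu$ only involves $\ss1, \ss2$, and there one can be more careful. Alternatively, and more cleanly, I would re-examine the constant in Lemma~\ref{L:NF}: when passing from $\uu$ to $\vv\ss{\nn,\kk}$ the auxiliary $(\nno)$-expression~$\vv$ has length exactly $\ell-(\nn-\kk) \le \ell-1$ whenever $\kk<\nn$ (and when $\kk=\nn$ no moves are needed at all for the $\nn$th strand), so the recursion actually reads $\dist(\uu,\NF(\uu)) \le \area(\uu) + \dist(\vv,\NF(\vv))$ with a strictly shorter expression feeding the recursive call; tightening the bookkeeping this way turns $\nn(\nno)\ell/2$ into roughly $(\nn-1)(\nn-2)\ell/2$, and the two halves then sum to $(\nn-1)(\nn-2)\ell$ as claimed.

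The main obstacle is therefore not conceptual but arithmetical: squeezing the constant from $\nn(\nno)$ down to $(\nn-1)(\nn-2)$ requires tracking exactly how the length drops at each stage of the normal-form reduction rather than using the crude bound ``length at most~$\ell$'' at every level. A safe route, if the sharp constant proves delicate, is to first prove the clean statement $\dist(\uu,\vv)\le \nn(\nno)\ell$ and then note that for $\ell$ at most linear in~$\nn$ this already yields a degree-$4$ bound in~$\nn$ (since $\ell \le \binom{\nn}{2}$), which is all that Proposition~1 needs; but the stated Proposition~\ref{P:Upper} asks for the explicit coefficient, so the careful length-accounting in Lemma~\ref{L:NF} is the step to get right. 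Either way, no new geometry is needed beyond what the area argument already supplies.
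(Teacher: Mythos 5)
Your route is exactly the paper's: Proposition~\ref{P:Upper} is deduced from Lemma~\ref{L:NF} by passing through the common normal form via the triangle inequality, $\dist(\uu,\vv)\le\dist(\uu,\NF(\uu))+\dist(\NF(\vv),\vv)$, and this part of your argument is correct. As you yourself observe, it yields $\dist(\uu,\vv)\le\nn(\nn-1)\ell$, which together with $\ell\le\nn(\nn-1)/2$ already gives the $\tfrac12\nn^4$ bound of Proposition~1 --- the only use made of this statement downstream.

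The genuine gap is in your proposed tightening to the displayed coefficient $(\nn-1)(\nn-2)$. Tracking the length drop through the recursion of Lemma~\ref{L:NF} replaces the bound $\sum_{j}(\nn-1-j)\,\ell$ by at best $\sum_{j}(\nn-1-j)(\ell-j)$, i.e.\ it saves $\sum_j j(\nn-1-j)$, which is of order $\nn^3/6$; but the gap between $\nn(\nn-1)\ell/2$ and $(\nn-1)(\nn-2)\ell/2$ is $(\nn-1)\ell$, which is of order $\nn^3/2$ when $\ell$ is near its maximal value $\nn(\nn-1)/2$. So the bookkeeping you describe does not close the gap, and the claim that it turns $\nn(\nn-1)\ell/2$ into ``roughly $(\nn-1)(\nn-2)\ell/2$'' fails precisely for the long expressions that matter. (Getting that coefficient would instead require sharpening Lemma~\ref{L:Area} to $\area(\uu)\le(\nn-2)\ell$, which is false in general: send the $\nn$th strand to position~$1$ immediately and follow with a long $(\nn-1)$-expression.) To be fair, the paper itself offers no derivation of $(\nn-1)(\nn-2)$ either --- it declares the proposition ``immediately deduced'' from Lemma~\ref{L:NF}, and that immediate deduction gives only $\nn(\nn-1)\ell$. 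So your proof is a correct proof of the bound $\nn(\nn-1)\ell$, which suffices for Proposition~1, but neither your argument nor the one sketched in the paper establishes the stated constant, and your claim to recover it should be dropped or replaced by an honest weakening of \eqref{E:Dist}.
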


As every reduced $\nn$-expression has length at most $\nn(\nn-1)/2$, the upper
bound $O(\nn^4)$ of Proposition~1 follows.

\begin{rema}
Considering the rightmost strand in the argument of Lemma~\ref{L:Area} is essential.
Indeed, for each reduced expression~$\uu$ and each~$\ii$ such that $\ss\ii \uu$ is not
reduced, \ie, such that the $\ii$th and the $\ii+1$st strands cross in~$\DD_\uu$, we can
consider the area~$\area_\ii$ of the domain bounded by the top line and the $\ii$th and
$\ii+1$st strands before they cross. It is natural to wonder whether $\uu$ can be
transformed into an equivalent expression~$\ss\ii \vv$ in such a way that the
parameter~$\area_\ii$ decreases at each step---thus obtaining a new proof of the Exchange
Lemma. The answer is negative. Indeed, assume~$\uu = \ss1\ss3\ss2\ss1\ss3 \ss2$. Then
the second and third strands cross in~$\DD_\uu$ and $\ss2 \uu$ is not reduced: $\uu$
is equivalent to $\ss2 \ss3\ss2\ss1\ss2\ss3$. However, there is no way to apply a
braid relation to~$\uu$ so as to decrease the area~$\area_2$ of the domain bounded by the
second and third strands. Indeed, the two expressions at distance~$1$ from~$\uu$ are
$\uu' =
\ss3\ss1\ss2\ss1\ss3
\ss2$ and $\uu'' =
\ss1\ss3\ss2\ss3\ss1
\ss2$, which satisfy $\area_2(\uu)  = \area_2(\uu')  = \area_2(\uu'')  = 9$, as shown in
the diagrams
\begin{center}
\begin{picture}(78,32)
\put(0,0){\includegraphics{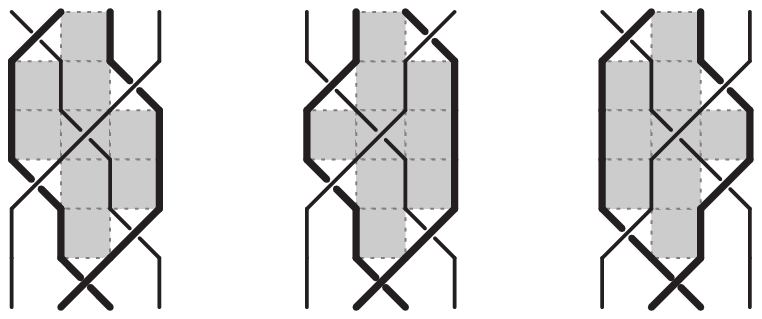}}
\put(-4,28){$\uu$}
\put(25,28){$\uu'$}
\put(54,28){$\uu''$}
\end{picture}
\end{center}
in each of which nine grey squares occur.
\end{rema}

\subsection{A lower bound result}
\label{S:Lower}

We turn to the other direction, namely proving lower bounds on the combinatorial
distance of two equivalent reduced expressions. To this end, we associate a name to each
letter in a reduced expression and observe that applying one braid relation can only change
the associated sequence of names by a limited amount.

\begin{nota}
Hereafter, we use $\Int\nn^{(\kk)}$ for the set of all subsets of~$\Int\nn$ that have
cardinality~$\kk$ (exactly), and $\Int\nn^{(2,2)}$ for the set of all subsets
of~$\Int\nn^{(2)}$ that have cardinality~$2$, \ie, the set of all non-degenerate pairs
of pairs in~$\Int\nn$.
\end{nota}

By construction, every crossing in a braid diagram~$\DD_\uu$ involves two strands,
each of which has an initial position that corresponds to an integer in~$\Int\nn$. By
considering the initial positions of the strands that cross there, we associate with each
instance of a letter~$\ss\ii$ in an $\nn$-expression a well defined pair~$\P\pp\qq$
in~$\Int\nn^{(2)}$, hereafter called its \emph{name}.

\begin{defi}
For each reduced $\nn$-expression~$\uu$, we define $\SS(\uu)$ to be the sequence
composed of the names of the successive letters in~$\uu$. 
\end{defi}

So, formally, $\SS(\uu)$ is the sequence in~$\Int\nn^{(2)}$ recursively defined by
$\SS(\ew) = \emptyset$ (the empty sequence) and, using
$\concat$ for concatenation, 
$$\SS(\uu) = \SS(\vv) \concat (\{\pp, \qq\}),$$
assuming that $\uu = \vv \ss\ii$ and the strands that finish at positions~$\ii$
and~$\ii+1$ in~$\DD_\vv$ are the $\pp$th and the $\qq$th ones, \ie, start at
positions~$\pp$ and~$\qq$, respectively.

\begin{exam}
\label{X:Flip}
Let $\uu_\nn = \ss{1, 1} \ss{2, 1} ... \ss{\nn, 1}$. Then $\uu_\nn$ is a reduced
expression of the Coxeter element of~$\Sym_\nn$, \ie, of the flip permutation~$\phi$
that exchanges $\ii$ and $\nn - \ii$ for each~$\ii$. An easy induction gives
\begin{equation}
\label{E:Flip11}
\SS(\uu_\nn) = (\{1,2\}, \{1, 3\}, \{2,3\}, ..., \{\nn-2,\nno\} \{1,\nn\}, \{2,\nn\}, ...,
\{\nno,\nn\}).
\end{equation}
Symmetrically, let $\vv_\nn$ be the expression obtained from~$\uu_\nn$ by reversing
the order of the factors and flipping their entries, \ie,
$\vv_\nn = \ss{\nn,1} \ss{\nn,2}  ... \ss{\nn, \nno}$. Then
$\vv_\nn$ is another reduced expression of~$\phi$, and we find
\begin{equation}
\label{E:Flip2}
\SS(\vv_\nn) = (\{\nno,\nn\}, ..., \{2,\nn\}, \{1, \nn\}, \{\nn-2,\nno\}, ..., \{2,3\}, \{1,
3\}, \{1,2\}):
\end{equation}
so $\SS(\vv_\nn)$ is the sequence obtained by reversing the entries of~$\SS(\uu_\nn)$.
\end{exam}

By construction, if $\uu$ is a reduced $\nn$-expression, the pair~$\{\pp,\qq\}$ occurs
in~$\SS(\uu)$ if and only if the strands starting at positions~$\pp$ and~$\qq$ cross in
the diagram~$\DD_\uu$, hence if and only if $\P\pp\qq$ is an inversion of the
permutation represented by~$\uu$. Hence, if $\uu, \vv$ are equivalent reduced
$\nn$-expressions, the pairs occurring in~$\SS(\uu)$ and in~$\SS(\vv)$ coincide, and
$\SS(\vv)$ is a permuted image of~$\SS(\uu)$. 

We shall see now that comparing the
sequences~$\SS(\uu)$ and~$\SS(\vv)$ leads to a lower bound on the combinatorial
distance between~$\uu$ and~$\vv$. 

\begin{defi}
If $\SS, \SS'$ are enumerations of $\Int\nn^{(2)}$, we denote by~$\II_3(\SS, \SS')$
(\resp $\II_{2,2}(\SS, \SS')$) the number of triples~$\T\pp\qq\rr$ in~$\Int\nn^{(3)}$
(\resp the number of pairs of pairs $\{\{\pp,\qq\},\{\ppp,\qqq\}\}$
in~$\Int\nn^{(2,2)}$) such that the order of the pairs
$\{\pp,\qq\}$, $\{\pp,
\rr\}$, $\{\qq,\rr\}$ (\resp the order of $\P\pp\qq$ and $\P\ppp\qqq$) is not the
same in~$\SS$ and~$\SS'$. 
\end{defi}

\begin{prop}
\label{P:Lower}
For all equivalent $\nn$-expressions~$\uu, \vv$, we have
\begin{equation}
\label{E:Lower}
\dist(\uu, \vv) \ge \II_3(\SS(\uu), \SS(\vv)) + \II_{2,2}(\SS(\uu), \SS(\vv)).
\end{equation}
More precisely, every derivation from~$\uu$ to~$\vv$ contains at least $\II_3(\SS(\uu),
\SS(\vv))$ relations of type~I and $\II_{2,2}(\SS(\uu),
\SS(\vv))$ relations of type~II.
\end{prop}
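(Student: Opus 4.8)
The plan is to track how the two "interleaving count" quantities $\II_3$ and $\II_{2,2}$ change under a single braid relation, and show that a type-I relation changes $\II_3$ by at most $1$ and leaves $\II_{2,2}$ unchanged, while a type-II relation changes $\II_{2,2}$ by at most $1$ and leaves $\II_3$ unchanged. Once this is established, a telescoping argument along any derivation $\uu = \uu_0 \to \uu_1 \to \dots \to \uu_m = \vv$ gives the bound: since $\II_3(\SS(\uu),\SS(\vv))$ must be ``used up'' one unit at a time and only type-I relations can touch it, at least $\II_3(\SS(\uu),\SS(\vv))$ of the relations must be of type~I, and symmetrically for $\II_{2,2}$ and type~II. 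Summing the two disjoint lower bounds yields~\eqref{E:Lower}.

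First I would set up the \emph{locality} of the effect of a braid relation on the name sequence. A braid relation is applied to two or three consecutive letters of the expression, so it only permutes the corresponding two or three entries of the name sequence $\SS(\uu)$; all other entries, and their relative order, are untouched. The key point is to identify \emph{which} names are involved. For a type-II relation $\ss\ii\ss\jj = \ss\jj\ss\ii$ with $|\ii-\jj|\ge2$, the two crossings involve four \emph{distinct} strands, say with initial positions giving names $\P\pp\qq$ and $\P\ppp\qqq$ with $\{\pp,\qq\}\cap\{\ppp,\qqq\}=\emptyset$; the relation swaps the order of exactly this one pair of pairs, hence changes $\II_{2,2}$ by exactly $1$. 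For a type-I relation $\ss\ii\ss{\ii+1}\ss\ii = \ss{\ii+1}\ss\ii\ss{\ii+1}$, the three crossings involve exactly three strands with initial positions $\{\pp,\qq,\rr\}$, and the three names are precisely $\P\pp\qq$, $\P\pp\rr$, $\P\qq\rr$ (in some order); one checks that the relation reverses this block of three names, so the \emph{cyclic vs.\ anticyclic} order of that single triple flips, changing $\II_3$ by exactly~$1$.

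Next I would verify the two \emph{invariance} claims: a type-I relation does not change $\II_{2,2}$, and a type-II relation does not change $\II_3$. For the type-I case, the three entries being permuted form a triple $\{\pp,\qq,\rr\}$; any pair of pairs $\{\{\ea,\eb\},\{\ec,\ed\}\}$ counted by $\II_{2,2}$ is \emph{non-degenerate}, i.e.\ the two pairs are disjoint, so they cannot both be among $\{\P\pp\qq,\P\pp\rr,\P\qq\rr\}$ (any two of those share an index). Hence at most one of the two pairs of such a pair-of-pairs lies in the permuted block, and permuting entries within the block while fixing everything else cannot change the relative order of two entries only one of which moves together with the others as a contiguous block --- more precisely, reversing a contiguous block of length~$3$ preserves the relative order of any entry outside the block with any single entry inside it only if\dots{} here I need to be slightly careful: reversing a block \emph{does} move each inside entry past the others, but an outside entry's position relative to the whole block is unchanged, so its order relative to each inside entry is unchanged. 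That is the crux. The symmetric argument handles type-II and $\II_3$: a type-II relation permutes two entries with disjoint names, and a triple $\{\pp,\qq,\rr\}$ contributes three names pairwise sharing an index, so at most one of them can be among the two swapped entries, and swapping two adjacent entries only one of which belongs to the triple does not change the order of the triple's three names.

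The \textbf{main obstacle} is making the ``contiguous block'' bookkeeping airtight: one must confirm that applying a braid relation really does act on $\SS(\uu)$ exactly by reversing the relevant length-$2$ or length-$3$ contiguous sub-block (not by some other permutation), which requires checking how the strand-labelling behaves through the relation --- the strands entering the affected region are the same, only their crossing order changes, and a direct inspection of the braid diagram patterns \eqref{E:Diagram} confirms the reversal. A secondary point to handle cleanly is the degeneracy check: one should state explicitly that in a \emph{reduced} expression the three strands meeting at a type-I block are genuinely distinct (no strand crosses another twice), so the triple $\{\pp,\qq,\rr\}$ really has three elements and the three names are distinct; this uses the characterization of reduced expressions via the diagram $\DD_\uu$ recalled earlier. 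With those two points nailed down, the telescoping is immediate and gives both the refined statement (the separate counts of type-I and type-II relations) and the displayed inequality~\eqref{E:Lower}.
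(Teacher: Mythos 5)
Your proposal is correct and follows essentially the same route as the paper: analyze the effect of a single braid relation on the name sequence (a reversal of a contiguous block of two or three entries), check that exactly one triple changes order under a type~I relation and exactly one non-degenerate pair of pairs under a type~II relation while the other quantity is untouched, and telescope along the derivation. The points you flag as needing care --- that the block is genuinely reversed, and that the overlap patterns of the names force every other triple and pair of pairs to keep its order --- are precisely the ones the paper verifies.
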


\begin{proof}
We first consider the case $\dist(\uu, \vv) = 1$, \ie, the case when $\vv$ is obtained
from~$\uu$ by applying one braid relation. Assume first that $\vv$ is obtained by
applying a type~I relation $\ss\ii \ss{\ii+1} \ss\ii = \ss{\ii+1} \ss\ii
\ss{\ii+1}$. Then there exists a unique triple $\T\pp\qq\rr$, namely the names of
the three strands involved in the transformation, such that the sequence~$\SS(\vv)$ is
obtained from~$\SS(\uu)$ by replacing the subsequence $\{\pp, \qq\}, \{\pp, \rr\},
\{\qq, \rr\}$ with $\{\qq, \rr\}, \{\pp, \rr\},\{\pp, \qq\}$---see Figure~\ref{F:Names}
below for an illustration. So the order of the three pairs arising from~$\T\pp\qq\rr$ has
changed between~$\SS(\uu)$ and~$\SS(\vv)$. On the other hand, any other triple 
in~$\Int\nn^{(3)}$ has at most two elements in common with
$\T\pp\qq\rr$, and the order of the three pairs arising from that triple is the same
in~$\SS(\uu)$ and~$\SS(\vv)$. Hence, we have $\II_3(\SS(\uu), \SS(\vv)) = 1$ in this case.
On the other hand, any pair of pairs in~$\Int\nn^{(2,2)}$ contains at most one of the
three pairs $\{\pp,\qq\}$, $\{\pp, \rr\}$, $\{\qq,\rr\}$ and, therefore, the order of this
pair is not changed from~$\SS(\uu)$ and~$\SS(\vv)$. Hence, we have $\II_{2,2}(\SS(\uu),
\SS(\vv)) = 0$ in this case.

Assume now that $\vv$ is obtained by applying a 
type~II relation $\ss\ii
\ss\jj =
\ss\jj \ss\ii$ with $\vert\jj-\ii\vert \ge 2$. Then there exists a unique pair of pairs
$\{\pp,\qq\}$, $\{\ppp, \qqq\}$ in~$\Int\nn^{(2,2)}$, namely the names of the four
strands involved in the transformation, such that the sequence~$\SS(\vv)$ is obtained
from~$\SS(\uu)$ by replacing the subsequence $\{\pp, \qq\}, \{\ppp, \qqq\}$ with
$\{\ppp, \qqq\},\{\pp, \qq\}$. So the order of the two considered pairs has changed
between~$\SS(\uu)$ and~$\SS(\vv)$. On the other hand, any other pair of pairs
in~$\Int\nn^{(2,2)}$ contains at most one of the two pairs $\{\pp, \qq\}, \{\ppp, \qqq\}$,
and its order is the same in~$\SS(\uu)$ and~$\SS(\vv)$. Hence, we have
$\II_{2,2}(\SS(\uu), \SS(\vv)) = 1$. Moreover, any triple in~$\Int\nn^{(3)}$ gives rise
to a triple of pairs that contains at most one of $\{\pp, \qq\}, \{\ppp, \qqq\}$. Hence
the order of the three pairs in this triple has not changed from~$\SS(\uu)$
and~$\SS(\vv)$, and we have $\II_3(\SS(\uu), \SS(\vv)) = 0$.

We conclude that, in every case, the quantity $\II_3(\SS(\uu), \SS(\vv)) + \II_{2,2}(\SS(\uu),
\SS(\vv))$ changes by not more than one (and even by exactly one) when one braid
relation is applied. This clearly implies~\eqref{E:Lower}.
\end{proof}

We can now complete the proof  of Proposition~1.

\begin{coro}
\label{C:Lower}
For each~$\nn$, there exist reduced $\nn$-expressions~$\uu, \vv$ satisfying
\begin{equation}
\label{E:Example}
\dist(\uu, \vv) \ge \frac18 \nn^4 + O(\nn^3).
\end{equation}
\end{coro}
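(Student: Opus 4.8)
The plan is to exhibit an explicit pair of equivalent reduced $\nn$-expressions for which the lower bound in Proposition~\ref{P:Lower} is of order $\nn^4/8$. The natural candidates are the two expressions $\uu_\nn$ and $\vv_\nn$ of the flip permutation~$\phi$ introduced in Example~\ref{X:Flip}: we saw there that $\SS(\vv_\nn)$ is obtained from $\SS(\uu_\nn)$ by reversing the order of \emph{all} entries. Since $\SS(\uu_\nn)$ is an enumeration of the full set $\Int\nn^{(2)}$ (every pair is an inversion of~$\phi$), this complete reversal is the most unfavorable possible rearrangement, so one expects both $\II_3$ and $\II_{2,2}$ to be close to their maximal values. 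By Proposition~\ref{P:Lower}, $\dist(\uu_\nn,\vv_\nn) \ge \II_3(\SS(\uu_\nn),\SS(\vv_\nn)) + \II_{2,2}(\SS(\uu_\nn),\SS(\vv_\nn))$, so it suffices to estimate these two quantities from below.

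First I would handle $\II_{2,2}$. A pair of disjoint pairs $\{\{\pp,\qq\},\{\ppp,\qqq\}\}$ contributes to $\II_{2,2}$ exactly when the order of $\P\pp\qq$ and $\P\ppp\qqq$ in $\SS(\vv_\nn)$ differs from that in $\SS(\uu_\nn)$; because $\SS(\vv_\nn)$ is the exact reversal of $\SS(\uu_\nn)$, this happens for \emph{every} such pair of pairs. Hence $\II_{2,2}(\SS(\uu_\nn),\SS(\vv_\nn)) = \vert\Int\nn^{(2,2)}\vert = \binom{\binom\nn2}{2} = \frac18 \nn^4 + O(\nn^3)$. This already gives the claimed leading term, and since $\II_3 \ge 0$, Corollary~\ref{C:Lower} follows immediately by taking $\uu = \uu_\nn$, $\vv = \vv_\nn$. (One could additionally note that $\II_3$ is itself of order $\nn^3$ here, by a similar argument: a triple contributes unless the three associated pairs happen to occur in the special ``block'' order $\{\pp,\qq\},\{\pp,\rr\},\{\qq,\rr\}$ that is palindromic under reversal—but this only affects lower-order terms, so it is not needed for the statement.)

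The verification splits into two routine points. One is the combinatorial identity $\binom{\binom\nn2}{2} = \frac18\nn^4 + O(\nn^3)$, which is immediate by expansion. The other, slightly more delicate, point is the claim that reversing a sequence reverses the relative order of \emph{every} pair of distinct entries: if $\{\pp,\qq\}$ precedes $\{\ppp,\qqq\}$ in $\SS(\uu_\nn)$, then after reversal $\{\ppp,\qqq\}$ precedes $\{\pp,\qq\}$, so their order has indeed changed; this is exactly the kind of elementary observation underlying the definition of $\II_{2,2}$, and it uses that $\{\pp,\qq\}$ and $\{\ppp,\qqq\}$ are distinct entries of the enumeration (guaranteed since all pairs of $\Int\nn^{(2)}$ appear, each exactly once, because $\uu_\nn$ is reduced).

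The main obstacle, such as it is, is purely bookkeeping: one must be careful that Example~\ref{X:Flip} really gives that $\SS(\vv_\nn)$ is the full reversal of $\SS(\uu_\nn)$ rather than merely a related rearrangement, and that $\phi$ has $\binom\nn2$ inversions so that $\SS(\uu_\nn)$ enumerates all of $\Int\nn^{(2)}$—both of which are stated or immediate from the excerpt. Beyond that there is no genuine difficulty; the force of the result lies entirely in Proposition~\ref{P:Lower}, and the corollary is just the observation that the flip permutation's two ``extreme'' reduced words realize the worst case for the $\II_{2,2}$ term, which alone already has the right order of magnitude.
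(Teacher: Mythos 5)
Your argument is correct and is essentially the paper's own proof: the same pair $(\uu_\nn,\vv_\nn)$ from Example~\ref{X:Flip}, the same observation that reversing $\SS(\uu_\nn)$ changes the relative order of every two entries, and the same appeal to Proposition~\ref{P:Lower}, with the $\II_{2,2}$ term alone supplying the $\frac18\nn^4$ leading term. The one harmless discrepancy is that the paper's $\Int\nn^{(2,2)}$ consists of pairs of \emph{disjoint} pairs (this is what the proof of Proposition~\ref{P:Lower} actually needs), so its cardinality is $3\binom{\nn}{4}$ rather than $\binom{\binom{\nn}{2}}{2}$; the two counts differ only by $O(\nn^3)$, so your asymptotic conclusion stands, and the paper additionally records $\II_3=\binom{\nn}{3}$ exactly, which you correctly absorb into the error term.
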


\begin{proof}
Consider the expressions~$\uu_\nn, \vv_\nn$ of Example~\ref{X:Flip}. As observed
above, the sequences $\SS(\uu_\nn)$ and~$\SS(\vv_\nn)$ are mirror images of one
another. It follows that each triple~$\T\pp\qq\rr$ in~$\Int\nn^{(3)}$ contributes~$1$ to
the parameter~$\II_3$, leading to
\begin{equation*}
\II_3(\SS(\uu_\nn), \SS(\vv_\nn)) = \# (\Int\nn^{(3)}) = {\nn \choose 3}.
\end{equation*}
Similarly, each pair of pairs
$\{\{\pp, \qq\}, \{\ppp, \qqq\}\}$ in~$\Int\nn^{(2,2)}$ contributes~$1$
to the parameter~$\II_{2,2}$, giving
$$\II_{2,2}(\SS(\uu_\nn), \SS(\vv_\nn)) = \# (\Int\nn^{(2,2)}) =
\frac12{\nn\choose2}{\nn-2 \choose 2} = 3{\nn
\choose 4}.
\eqno\square
$$
\let\qed\relax
\end{proof}

Owing to the last sentence in Proposition~\ref{P:Lower}, we 
also deduce from the above computation the result that the
number of type~I braid relations occurring
in every derivation of~$\vv$ from~$\uu$ is at least
$\nn^3/6 + O(\nn^2)$.

In the context of Proposition~\ref{P:Lower}, it is natural
to wonder whether \eqref{E:Lower} is always an
equality, as it turns out to be in simple cases.

\begin{ques}
Does the equality 
\begin{equation}
\label{Q:Lower}
\dist(\uu, \vv) = \II_3(\SS(\uu), \SS(\vv)) +
\II_{2,2}(\SS(\uu), \SS(\vv))
\end{equation}
hold for all equivalent 
$\nn$-expressions~$\uu, \vv$?
\end{ques}

So far we have not been able to obtain any answer. In
particular, computer tries failed to find expressions
disproving the equality~\eqref{Q:Lower}.

\begin{rema}
A result similar to Proposition~\ref{P:Lower} can be obtained by simply counting the
inversion number~$\II(\SS(\uu), \SS(\vv))$ of the sequences~$\SS(\uu)$ and~$\SS(\vv)$,
\ie, the total number of pairs of pairs whose order is changed. One easily checks that
~$\II(\SS(\uu), \SS(\vv))$ is changed by three when a type~I relation is
applied, and by~one in the case of a type~II relation, thus leading
to\begin{equation*}
\label{E:LowerBis}
\dist(\uu, \vv) \ge \Inv(\SS(\uu), \SS(\vv))/3.
\end{equation*}

Also, it can be mentionned that using areas in braid diagrams as in
Section~\ref{S:Upper}  can also lead to lower bounds on the
combinatorial distance. Indeed, it is easy to check that applying one braid relation can
change such areas by a bounded factor~$K$ only, leading to inequalities of the generic form 
$$\dist(\uu, \vv) \ge \vert \mathrm{area}(\DD_\uu) -
\mathrm{area}(\DD_\vv)\vert/ K.$$
\end{rema}

\subsection{Van Kampen diagrams}
\label{S:Van}

Proposition~\ref{P:Lower} and the approach of Section~\ref{S:Lower} leads to a nice
geometric criterion for proving that a derivation between two reduced expressions~$\uu,
\vv$ of a permutation~$\pi$ is possibly optimal, \ie, that it realizes the combinatorial
distance~$\dist(\uu,\vv)$ between~$\uu$ and~$\vv$.

Assume that $\uu, \vv$ are equivalent expressions. A \emph{van Kampen
diagram for~$(\uu, \vv)$} is a planar connected diagram~$\KKK$ consisting of
finitely many adjacent tiles of the two types 
$$\begin{picture}(114,17)(0,0)
\put(0,7){type~I:}
\put(13,0){\includegraphics{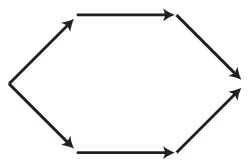}}
\put(13,2){$\ss\jj$}
\put(13,12){$\ss\ii$}
\put(23,-2){$\ss\ii$}
\put(23,16){$\ss\jj$}
\put(34,2){$\ss\jj$}
\put(34,12){$\ss\ii$}
\put(38,7){with $\vert\ii-\jj\vert{=}1$,}
\put(64,7){type II:}
\put(77,0){\includegraphics{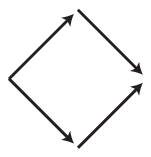}}
\put(77,2){$\ss\jj$}
\put(77,12){$\ss\ii$}
\put(88,12){$\ss\jj$}
\put(88,2){$\ss\ii$}
\put(92,7){with $\vert\ii-\jj\vert{\ge}2$,}
\end{picture}$$
and such that the boundary
of~$\KKK$ consists of two paths labeled~$\uu$ and~$\vv$. Because of the orientation of the
edges of the tiles, such a diagram has exactly one initial vertex (source) and one terminal
vertex (sink), and two boundary paths labeled~$\uu$ and $\vv$ from the
source to the sink. 

It is standard---see for
instance~\cite{LyS} or~\cite{Eps}---that, if
$\uu, \vv$ are reduced expressions, then $\vv$ can be derived from~$\uu$ using braid
relations if and only if there exists a van Kampen diagram for~$(\uu, \vv)$. See
Figure~\ref{F:Van} for an example. More precisely, if $\vv$ can be derived from~$\uu$
using $\NN$~braid relations, then there exists a van Kampen diagram for~$(\uu, \vv)$ that
contains $\NN$~tiles, and conversely. So, we have the natural notion of an optimal (or
minimal) van Kampen diagram:

\begin{defi}
Assume that $\KKK$ is a van Kampen diagram for~$(\uu, \vv)$. We say that $\KKK$ is
\emph{optimal} if the number of tiles in~$\KKK$ equals the combinatorial
distance~$\dist(\uu, \vv)$.
\end{defi}

In other words, a van Kampen diagram is declared optimal if there exists no smaller
(\ie, containing less tiles) diagram with the same boundary.
Our aim in the sequel will be to describe criteria for
recognizing that a van Kampen diagram is possibly optimal.
The first idea is to use names. Assume that $\KKK$ is a van Kampen diagram for a
pair~$(\uu, \vv)$. Then we can unambiguously attribute a \emph{name} with every tile
of~$\KKK$. First, we attribute a name to each edge of~$\KKK$. Let $\ee$ be such an edge. It
follows from the definition of a van Kampen diagram that there exists at least one path that
connects the source of~$\KKK$ to any vertex~$\VV$, and at least one path that
connects~$\VV$ to the sink of~$\KKK$. Hence there is a path~$\g$ that connects the source
of~$\KKK$ to its sink and contains~$\ee$. Then the successive labels of the edges of~$\g$
make an expression~$\ww$, which is certainly equivalent to~$\uu$ (and~$\vv$) since, by
construction, there is a van Kampen diagram for~$(\uu, \ww)$, namely the subdiagram
of~$\KKK$ bounded by~$\g$ and the path labeled~$\uu$. Then, as in
Section~\ref{S:Lower}, we attribute a name to each letter of~$\ww$, and copy these names
on the corresponding edges of~$\g$. In this way, we have given a name to~$\ee$ which is a
certain pair in~$\Int\nn^{(2)}$. We claim that this name only depends on~$\ee$, and not
on the choice of the path~$\g$. As any two paths~$\g, \g'$ correspond to equivalent
expressions~$\ww, \ww'$, it is enough to consider the case when $\ww$ and $\ww'$ are
deduced from one another using one braid relation, and the result is then obvious. Then, we
observe that the names occurring on the edges of an elementary tile can be of two types
only, namely those displayed in Figure~\ref{F:Names}.

\begin{figure}[htb]
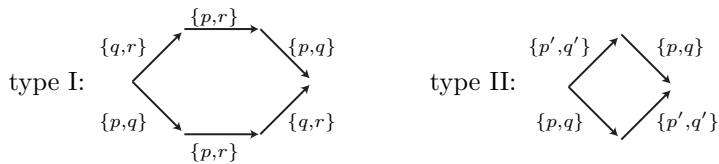

\begin{picture}(98,21)(-12,0)
\put(-12,7){type~I:}
\put(4,0){\includegraphics{TileI.eps}}
\put(0,2){$\Ps\pp\qq$}
\put(0,12){$\Ps\qq\rr$}
\put(25,12){$\Ps\pp\qq$}
\put(25,2){$\Ps\qq\rr$}
\put(12,-2){$\Ps\pp\rr$}
\put(12,16){$\Ps\pp\rr$}
\put(44,7){type~II:}
\put(62,0){\includegraphics{TileII.eps}}
\put(58,2){$\Ps\pp\qq$}
\put(57,12){$\Ps\ppp\qqq$}
\put(74,12){$\Ps\pp\qq$}
\put(74,2){$\Ps\ppp\qqq$}
\end{picture}
\caption{\sf Giving names to the tiles in a van Kampen diagram: the left tile is called
$\T\pp\qq\rr$, the right one is called~$\Q\pp\qq\ppp\qqq$.}
\label{F:Names}
\end{figure}

\begin{defi}
(See Figures~\ref{F:Names} and~\ref{F:Van}.) Assume that $\KKK$ is a van Kampen diagram
for~$(\uu,\vv)$.  We define the \emph{name} of a tile in~$\KKK$ as follows:

$\bullet$ for a type~I tile, it is the (unique) triple~$\T\pp\qq\rr$ of~$\Int\nn^{(3)}$ such
that the names of the border edges are $\{\pp, \qq\}$,
$\{\pp, \rr\}$, and $\{\qq, \rr\}$;  

$\bullet$ for a type~II tile, it is the (unique) pair~$\{\{\pp, \qq\},
\{\ppp, \qqq\}\}$ of~$\Int\nn^{(2,2)}$ such that the names
of the border edges are $\{\pp,
\qq\}$ and
$\{\ppp, \qqq\}$.  
\end{defi}

\begin{figure}[htb]
\begin{picture}(88,50)(0,1)
\put(-1,0.5){\includegraphics{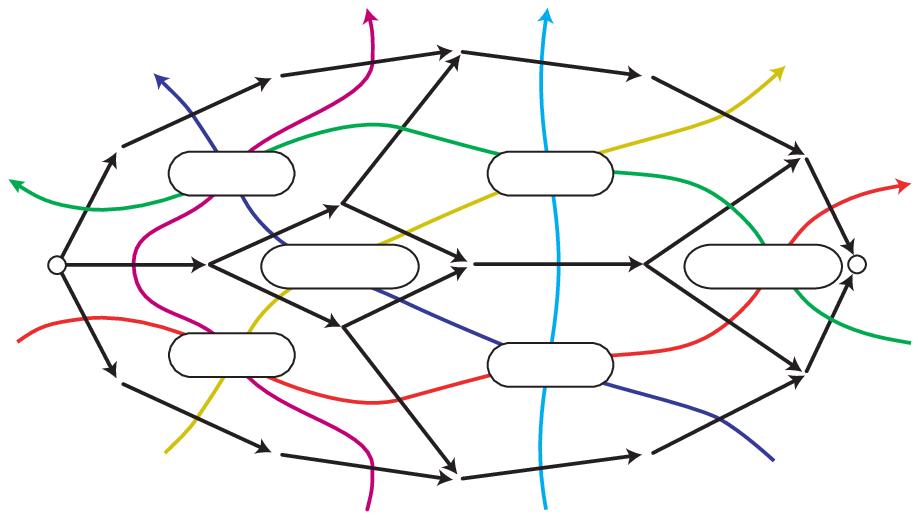}}
\put(24.8,24.8){$\Qs1324$}
\put(67.8,24.8){$\Qs1234$}
\put(-6,15){$\SEP12$}
\put(9,4){$\SEP13$}
\put(32,-2){$\SEP23$}
\put(50,-2){$\SEP14$}
\put(78,3){$\SEP24$}
\put(92,16){$\SEP34$}
\put(17,15.7){$\Ts123$}
\put(17,34){$\Ts234$}
\put(49.7,34){$\Ts134$}
\put(49.7,14.7){$\Ts124$}
\put(2,29){$\ss3$}
\put(2,21){$\ss1$}
\put(12,41){$\ss2$}
\put(12,9){$\ss2$}
\put(30,47){$\ss3$}
\put(30,3){$\ss1$}
\put(8,27){$\ss2$}
\put(20.5,29){$\ss3$}
\put(20.5,21){$\ss1$}
\put(38,36){$\ss2$}
\put(38,14){$\ss2$}
\put(38,30){$\ss1$}
\put(38,19){$\ss3$}
\put(50,27){$\ss2$}
\put(49,47.5){$\ss1$}
\put(49,2){$\ss3$}
\put(65,30){$\ss1$}
\put(65,19){$\ss3$}
\put(68,43){$\ss2$}
\put(68,6){$\ss2$}
\put(82,33){$\ss3$}
\put(82,17){$\ss1$}
\end{picture}
\begin{picture}(88,54)(0,1)
\put(-1,0){\includegraphics{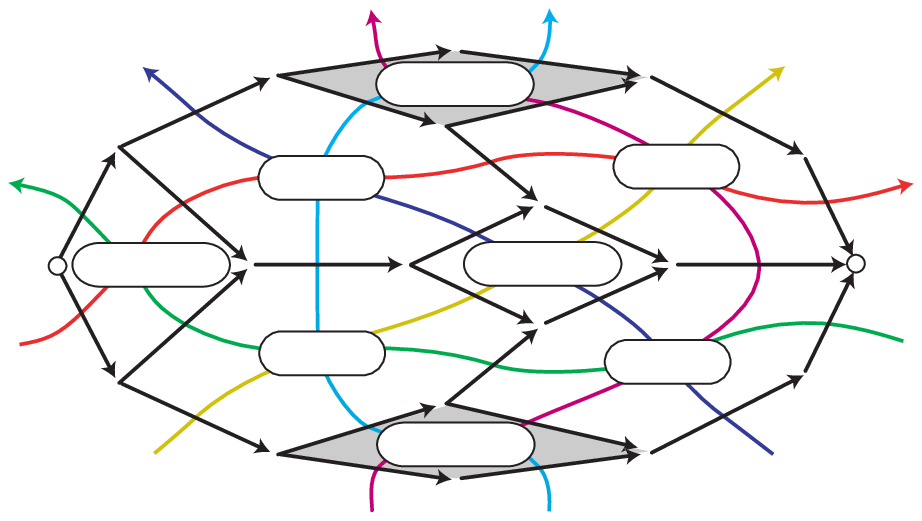}}
\put(36.5,6){$\Qs1423$}
\put(36.5,42.6){$\Qs1423$}
\put(5.3,24.3){$\Qs1234$}
\put(45,24.3){$\Qs1324$}
\put(2,29){$\ss3$}
\put(2,20){$\ss1$}
\put(12,41){$\ss2$}
\put(12,8){$\ss2$}
\put(30,46.5){$\ss3$}
\put(30,40){$\ss1$}
\put(30,9){$\ss3$}
\put(30,2.5){$\ss1$}
\put(15,33){$\ss1$}
\put(15,15){$\ss3$}
\put(50,34){$\ss2$}
\put(50,38.5){$\ss3$}
\put(50,15){$\ss2$}
\put(48,11){$\ss1$}
\put(40,28){$\ss1$}
\put(40,21){$\ss3$}
\put(26,26.5){$\ss2$}
\put(70,26.5){$\ss2$}
\put(49,47){$\ss1$}
\put(49,2){$\ss3$}
\put(56,30){$\ss3$}
\put(56,18.5){$\ss1$}
\put(68,43){$\ss2$}
\put(68,6){$\ss2$}
\put(81,34){$\ss3$}
\put(81.5,16){$\ss1$}
\put(911,33){$\SEP12$}
\put(76,46.5){$\SEP13$}
\put(8,46.5){$\SEP24$}
\put(-9,33){$\SEP34$}
\put(61.7,14.5){$\Ts234$}
\put(26.3,33.3){$\Ts124$}
\put(26.3,15.3){$\Ts134$}
\put(62.5,34.2){$\Ts123$}
\end{picture}
\caption{\sf Two van Kampen diagrams for the pair $(\ss1\ss2\ss1\ss3\ss2\ss1,$
$\ss3\ss2\ss3\ss1\ss2\ss3)$: a tiling by
squares and hexagons witnessing the equivalence of the two expressions. Indexed by pairs,
the separatrices are drawn in grey, inducing a name for every tile. In the top diagram, no two
tiles share the same name, hence the diagram is optimal, and the combinatorial distance
is~$6$; by contrast, two tiles in the bottom diagrams share the name~$\Q1423$, hence the
diagram cannot be optimal.}
\label{F:Van}
\end{figure}

Then the optimality criterion is

\begin{prop}
\label{P:Minimal}
A van Kampen diagram in which any two tiles have different names is optimal.
\end{prop}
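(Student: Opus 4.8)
The plan is to combine Proposition~\ref{P:Lower} with a counting argument on tiles. Suppose $\KKK$ is a van Kampen diagram for $(\uu,\vv)$ in which all tiles have distinct names, and let $\NN_{\mathrm I}$ and $\NN_{\mathrm{II}}$ be the numbers of type~I and type~II tiles, so that $\KKK$ witnesses a derivation of~$\vv$ from~$\uu$ using $\NN_{\mathrm I} + \NN_{\mathrm{II}}$ braid relations; thus $\dist(\uu,\vv) \le \NN_{\mathrm I} + \NN_{\mathrm{II}}$. The goal is the reverse inequality, for which, by the refined form of Proposition~\ref{P:Lower}, it suffices to show $\II_3(\SS(\uu),\SS(\vv)) \ge \NN_{\mathrm I}$ and $\II_{2,2}(\SS(\uu),\SS(\vv)) \ge \NN_{\mathrm{II}}$.

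The first step is to set up the key bijection. Since all tiles have distinct names, the type~I tiles inject into $\Int\nn^{(3)}$ via their names, and the type~II tiles inject into $\Int\nn^{(2,2)}$. First I would argue that the name $\T\pp\qq\rr$ of a type~I tile is necessarily a triple whose three associated pairs $\{\pp,\qq\}$, $\{\pp,\rr\}$, $\{\qq,\rr\}$ all occur among the edge-names of $\KKK$ and hence (since edge-names along any source-to-sink path are exactly the names of the letters in an equivalent reduced expression) are all inversions of~$\pi$; in particular all three pairs occur in both $\SS(\uu)$ and $\SS(\vv)$. Likewise the name $\{\{\pp,\qq\},\{\ppp,\qqq\}\}$ of a type~II tile has both pairs occurring in $\SS(\uu)$ and $\SS(\vv)$. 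So it remains to show: whenever $\KKK$ contains a type~I tile named $\T\pp\qq\rr$, the order of the three pairs $\{\pp,\qq\},\{\pp,\rr\},\{\qq,\rr\}$ differs between $\SS(\uu)$ and $\SS(\vv)$; and whenever $\KKK$ contains a type~II tile named $\{\{\pp,\qq\},\{\ppp,\qqq\}\}$, the order of the two pairs differs between $\SS(\uu)$ and $\SS(\vv)$. Granting this, the injections give $\NN_{\mathrm I} \le \II_3$ and $\NN_{\mathrm{II}} \le \II_{2,2}$, and we are done.

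The heart of the argument, and the step I expect to be the main obstacle, is proving that a tile forces a change of order of its associated pairs between the two boundary words. The idea is to look at how the ``name sequence'' evolves as one sweeps a source-to-sink path across $\KKK$ from the $\uu$-side to the $\vv$-side, crossing one tile at a time. Each elementary move across a tile is exactly an application of one braid relation, and by the computation inside the proof of Proposition~\ref{P:Lower}, crossing a type~I tile named $\T\pp\qq\rr$ transposes the subsequence $(\{\pp,\qq\},\{\pp,\rr\},\{\qq,\rr\})$ to its reverse and leaves the relative order of every other triple of pairs and every other pair-of-pairs unchanged; crossing a type~II tile named $\{\{\pp,\qq\},\{\ppp,\qqq\}\}$ swaps the two pairs $\{\pp,\qq\},\{\ppp,\qqq\}$ and changes nothing else. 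Now fix a triple $\T\pp\qq\rr$ that is the name of some type~I tile of~$\KKK$. Along the sweep, the relative order of the three pairs $\{\pp,\qq\},\{\pp,\rr\},\{\qq,\rr\}$ is toggled exactly once each time a type~I tile named $\T\pp\qq\rr$ is crossed, and by the distinct-names hypothesis there is exactly one such tile; type~II tiles never affect this particular order (a pair-of-pairs shares at most one pair with the triple). Hence the order of these three pairs is toggled an odd number of times ($=1$) over the whole sweep, so it differs between $\SS(\uu)$ and $\SS(\vv)$, i.e.\ this triple contributes to $\II_3$. The same parity argument, with type~I and type~II swapped, handles a pair-of-pairs that is the name of a type~II tile. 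The only genuinely delicate point is making the ``sweep'' rigorous: one must check that $\KKK$ admits an ordering of its tiles $t_1,\dots,t_\NN$ such that for each $k$ the union $t_1 \cup \dots \cup t_k$ is a van Kampen diagram for $(\uu, \ww_k)$ for some expression $\ww_k$ (so that passing from $\ww_{k-1}$ to $\ww_k$ is a single braid relation), with $\ww_0 = \uu$ and $\ww_\NN = \vv$; this is the standard ``shelling'' of a planar van Kampen diagram from one boundary path to the other, and I would either cite it from \cite{LyS} or prove it by an easy induction on the number of tiles, peeling off a tile adjacent to the current $\uu$-side boundary. With the sweep in place the parity bookkeeping is immediate and the proposition follows.
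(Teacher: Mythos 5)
Your proposal is correct and follows essentially the same route as the paper: both reduce the statement to Proposition~\ref{P:Lower} by observing that a tile with a unique name forces the order of its associated pairs to change between $\SS(\uu)$ and $\SS(\vv)$, so the distinctly-named tiles inject into the quantities $\II_3$ and $\II_{2,2}$. Your extra care about making the tile-by-tile sweep (shelling) rigorous only spells out the standard correspondence between van Kampen diagrams and derivations that the paper invokes implicitly.
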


\begin{proof}
The proof of Proposition~\ref{P:Lower} shows that, if $\KKK$ contains exactly one
tile named $\T\pp\qq\rr$, then the order of  $\P\pp\qq$, $\P\pp\rr$, $\P\qq\rr$ has
changed between~$\SS(\uu)$ and~$\SS(\vv)$. Hence, under the hypothesis, if
$\KKK$ contains $N$~hexagons with pairwise distinct names, then we have
$\II_3(\SS(\uu), \SS(\vv)) \ge N$. Similarly, if $\KKK$ contains exactly one
tile named $\Q\pp\qq\ppp\qqq$, then the order of  $\P\pp\qq$ and $\P\ppp\qqq$ has
changed between~$\SS(\uu)$ and~$\SS(\vv)$. So, if
$\KKK$ contains $N'$~squares with pairwise distinct names, then we have
$\II_{2,2}(\SS(\uu), \SS(\vv)) \ge N'$. Applying Proposition~\ref{P:Lower}, we deduce
$\dist(\uu, \vv) \ge \NN + \NN'$, hence $\dist(\uu, \vv) = \NN + \NN'$.
\end{proof}

\subsection{Separatrices}
\label{S:Sep}

Having given names to the edges of a van Kampen diagram, we can now draw curves that
connect the edges with the same name.

\begin{defi}
Assume that $\KKK$ is a van Kampen diagram for~$(\uu, \vv)$, and $\P\pp\qq$ is an
inversion of the permutation~$\pi$ represented by~$\uu$ and~$\vv$, \ie, we have $(\qq -
\pp)(\pi(\qq) - \pi(\pp)) < 0$. The
\emph{$\P\pp\qq$-separatrix} in~$\KKK$ is the curve~$\SEP\pp\qq$ obtained by 
connecting the middle points of the (diametrally opposed) edges named~$\P\pp\qq$ inside
each tile, oriented so that the edges of~$\uu$ are crossed first, and those of~$\vv$ last. 
\end{defi}

If $\P\pp\qq$ is not an inversion of the involved permutation, we may consider that
$\SEP\pp\qq$ still exists, but it lies outside the diagram and cuts no edge.

So, by definition, separatrices are obtained by connecting patterns of the form
$$\begin{picture}(98,22)(0,0)
\put(4,0){\includegraphics{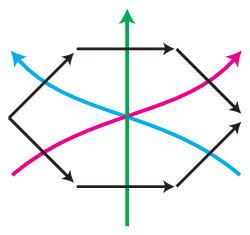}}
\put(0,3){$\SEP\pp\qq$}
\put(17,0){$\SEP\pp\rr$}
\put(28,3){$\SEP\qq\rr$}
\put(33,10){for type~I, \ and}
\put(62,4){\includegraphics{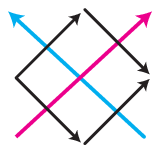}}
\put(58,3){$\SEP\pp\qq$}
\put(77,3){$\SEP\ppp\qqq$}
\put(82,10){for type II.}
\end{picture}$$
To make~$\SEP\pp\qq$ unique, we might require in addition that the curve we
choose inside each tile is the image of the median lines under the affine transformation that
maps a regular polygon to the considered tile (provided the latter is convex). 

\goodbreak
Then we have the following optimality criterion involving separatrices.

\begin{prop}
\label{P:MinimalSep}
A van Kampen diagram in which  any two separatrices cross at
most once is optimal.
\end{prop}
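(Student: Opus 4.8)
The plan is to reduce Proposition~\ref{P:MinimalSep} to Proposition~\ref{P:Minimal}: if any two separatrices in a van Kampen diagram~$\KKK$ cross at most once, then no two tiles of~$\KKK$ carry the same name, and then optimality is immediate. So the whole content is the implication ``two tiles with the same name $\Rightarrow$ two separatrices crossing at least twice''. First I would record what it means for a separatrix $\SEP\pp\qq$ to pass through a tile: by the local picture above, $\SEP\pp\qq$ enters a type~I tile named $\T\pp\qq\rr$ (for any third index $\rr$) through one of its $\P\pp\qq$-edges and leaves through the diametrically opposite one, and similarly for type~II; in particular each tile named $\T\pp\qq\rr$ is traversed by exactly the three separatrices $\SEP\pp\qq$, $\SEP\pp\rr$, $\SEP\qq\rr$, which pairwise cross once inside it, and each tile named $\Q\pp\qq\ppp\qqq$ is traversed by exactly $\SEP\pp\qq$ and $\SEP\ppp\qqq$, which cross once inside it.

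Now suppose two distinct tiles $T$ and $T'$ of $\KKK$ have the same name. Consider first the type~I case, name $\T\pp\qq\rr$. Each of the three separatrices $\SEP\pp\qq$, $\SEP\pp\rr$, $\SEP\qq\rr$ passes through both $T$ and $T'$. The key observation is that two of these three, say $\SEP\pp\qq$ and $\SEP\pp\rr$ (any two of them), are each simple arcs running from the $\uu$-boundary to the $\vv$-boundary of $\KKK$, and they cross once inside $T$ and once inside $T'$ — two distinct crossing points since $T\neq T'$ — contradicting the hypothesis. (One should check the crossing points are genuinely distinct: they lie in the interiors of different tiles, hence are different points of the plane.) The type~II case is the same with a single pair of separatrices $\SEP\pp\qq$, $\SEP\ppp\qqq$, which again cross once in $T$ and once in $T'$. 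In both cases we have produced two separatrices crossing at least twice, contradiction; hence all tile names are distinct, and Proposition~\ref{P:Minimal} gives that $\KKK$ is optimal.

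The step I expect to require the most care is the ``traversal'' claim: that a separatrix $\SEP\pp\qq$, defined only as the concatenation of median segments across the tiles whose border contains a $\P\pp\qq$-edge, is in fact a well-defined, connected curve each of whose interior crossings with another separatrix occurs inside a single tile (so that ``crossing twice'' can be detected tile by tile). This follows from the consistency of the naming of edges established before the definition of separatrices — an edge named $\P\pp\qq$ is shared by at most two tiles, and in each such tile exactly one other edge is named $\P\pp\qq$, so the median segments chain up unambiguously — together with the remark that inside a convex tile the chosen median arcs are straight segments through the centre, any two of which meet in exactly one point. I would also note the boundary behaviour: since $\P\pp\qq$ being an inversion of $\pi$ means it occurs exactly once in $\SS(\uu)$ and once in $\SS(\vv)$, the curve $\SEP\pp\qq$ has exactly one endpoint on the $\uu$-path and one on the $\vv$-path, so it is an arc, not a loop, which is what makes ``crosses another separatrix at most once'' a meaningful and restrictive condition. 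With these points in place the argument is exactly the two-paragraph reduction above.
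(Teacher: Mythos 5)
Your proposal is correct and follows essentially the same route as the paper: both arguments identify the crossings of two separatrices with the tiles bearing the corresponding name (three pairwise crossings per type~I tile, one per type~II tile), conclude that ``all separatrices cross at most once'' forces all tile names to be distinct, and then invoke Proposition~\ref{P:Minimal}. The extra care you take about separatrices being well-defined connected arcs from the $\uu$-boundary to the $\vv$-boundary is a reasonable elaboration of what the paper leaves implicit in the definition.
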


\begin{proof}
By construction, for each triple~$\T\pp\qq\rr$ in~$\Int\nn^{(3)}$, the only place
where $\SEP\pp\qq$ and $\SEP\pp\rr$ may intersect is a type~I tile
named~$\T\pp\qq\rr$ and, conversely, the three separatrices appearing in a type~I tile
pairwise cross one another. Similarly,  for each~$\Q\pp\qq\ppp\qqq$ in~$\Int\nn^{(2,2)}$,
the only place where $\SEP\pp\qq$ and $\SEP\ppp\qqq$ may intersect is a
type~II tile named~$\Q\pp\qq\ppp\qqq$, and, conversely, the two separatrices do cross in
such a type~II tile. Summarizing, the number of times two separatrices $\SEP\pp\qq$
and $\SEP\ppp\qqq$ cross is exactly the number of type~I tiles named
$\T\pp\qq{\ppp,\qqq}$ if the set $\{\pp,\qq,\ppp,\qqq\}$ has three elements, and the
number of type~II tiles named
$\Q\pp\qq\ppp\qqq$ if it has four elements. Hence all tiles in~$\KKK$ have pairwise
different names if and only if any two separatrices cross at most once in~$\KKK$, and we
apply Proposition~\ref{P:Minimal}.
\end{proof}

\section{Subword reversing}
\label{S:Reversing}

Up to now, we considered the combinatorial distance between two expressions~$\uu, \vv$
representing some permutation, \ie, the minimal number of braid relations needed to
transform~$\uu$ into~$\vv$. We now address a related, but different question, namely
the number of braid relations needed to transform~$\uu$ into~$\vv$ when one uses the
specific strategy called \emph{subword reversing}~\cite{Dff, Dgc}. The latter is known to
solve the word problem, \ie, to provide a step-by-step transformation of~$\uu$ into~$\vv$
by means of braid relations when this is possible, \ie, when $\uu$ and~$\vv$ are
equivalent reduced expressions.

Investigating the optimality of the reversing method, \ie, comparing the number of braid
relations used by subword reversing and the combinatorial distance, is a natural question.
Little is known at the moment. Here, we shall establish several partial results. We observe in
particular that the reversing method need not be optimal but, using the geometric criteria of
Sections~\ref{S:Van} and~\ref{S:Sep}, we characterize cases when reversing turns out to be
optimal. Also, in a slightly different context of non-equivalent initial expressions, we shall
prove that there exist length~$\ell$ expressions~$\uu, \vv$ whose reversing require
using~$O(\ell^4)$ braid relations, which is not redundant with Proposition~1.

\subsection{Subword reversing and reversing diagrams}

We recall that an $\nn$-expres\-sion is a word on the alphabet $\{\ss1, ..., \ss\nno\}$.
Hereafter, we introduce a second alphabet $\{\sss1, ..., \sss\nno\}$ which is a formal copy of
the previous one. A word over the extended alphabet $\{\ss1, ..., \ss\nno, \sss1, ...,
\sss\nno\}$ will be called an \emph{extended $\nn$-expression}. For each extended
expression~$\uu$, we denote by~$\sym\uu$ the extended expression obtained from~$\uu$
by reversing the order of the letters and exchanging~$\ss\ii$ and~$\sss\ii$ everywhere. So,
for instance, we have $\sym{\ss1\ss2} = \sss2\; \sss1$.

We now introduce a binary relation~$\rev$ (or rewrite rule) on extended expressions.

\begin{defi}
If $\ww, \ww'$ are extended expressions, we declare that $\ww \rev \ww'$
holds if $\ww'$ is obtained from~$\ww$ by replacing some subword~$\sss\ii \ss\jj$
either by $\ss\jj \ss\ii \sss\jj \sss\ii$ (if $\vert \ii - \jj\vert = 1$ holds), or by $\ss\jj
\sss\ii$ (if $\vert\ii - \jj\vert \ge 2$ holds), or by the empty word~$\ew$ (if $\jj = \ii$
holds). A finite sequence $(\ww_0, ..., \ww_\NN)$ is called a \emph{reversing
sequence} if $\ww_\kk \rev \ww_{\kk+1}$ holds for each~$\kk$. Finally, we
say that $\ww$ \emph{reverses} to~$\ww'$ if there exists a reversing
sequence $(\ww_0, ..., \ww_\NN)$ satisfying $\ww_0 = \ww$ and $\ww_\NN = \ww'$.
\end{defi}

The principle of reversing is to push the letters~$\sss\ii$  to the right, and the
letters~$\ss\jj$ to the left, until no subword~$\sss\ii \ss\jj$ remains.  For
instance, $\sss1\sss2\sss1\ss2\ss1\ss2$ reverses to the empty word, as we have

\medskip\noindent 
$(*)$ \  $\sss1\sss2\BOLD{\sss1\ss2}\ss1\ss2
\rev \sss1\BOLD{\sss2\ss2}\ss1\sss2\sss1\ss1\ss2
\rev \BOLD{\sss1\ss1}\sss2\sss1\ss1\ss2
\rev \sss2\BOLD{\sss1\ss1}\ss2
\rev \BOLD{\sss2\ss2}
\rev \ew$,

\medskip\noindent 
where, at each step, the subword that will be reversed has been written
in bold.

As will become clear below, a reversing sequence from~$\sym\uu \vv$ to the empty word
provides a distinguished way of transforming~$\uu$ into~$\vv$ by means of braid relations
and, therefore, there exists an associated van Kampen diagram. Now, it follows from the
particular definition of subword reversing that the associated van Kampen diagrams have
specific properties, namely they can essentially be drawn on a rectangular grid, a specific
point that will be important in the sequel. 

These diagrams that are essentially van Kampen diagrams will be called \emph{reversing
diagrams}.  As a van Kampen diagram, a reversing diagram consists of edges labeled by
letters~$\ss\ii$. The specific point is that, in a reversing diagram, all edges are either
horizontal right-oriented edges and vertical down-oriented edges, and that, in addition to
the latter, there may exist
$\ew$-labeled arcs. Assume that $(\ww_0, ..., \ww_\NN)$ is a reversing sequence, hence a
sequence of extended expressions containing both types of letters~$\ss\ii$ and~$\sss\jj$.
First, we draw a connected path indexed by the successive letters of~$\ww_0$ by attaching a
horizontal arrow $\stackrel{\ss\ii}{\rightarrow}$ with each letter~$\ss\ii$, and a vertical
arrow~$\downarrow^{\!\ss\ii}$ with each letter~$\sss\ii$. Then, we inductively complete
the diagram as follows. Assume that one goes from~$\ww_{\kk-1}$ to~$\ww_\kk$ by
reversing some subword~$\sss\ii\ss\jj$. By induction hypothesis, the latter
subword~$\sss\ii\ss\jj$ corresponds to an open pattern 
\raisebox{-3mm}{
\begin{picture}(9,8)
\put(2,1){\includegraphics[scale=0.5]{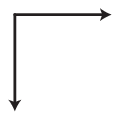}}
\put(0,3){$\scriptstyle\ss\ii$}
\put(3.5,6.5){$\scriptstyle\ss\jj$}
\end{picture}}
in the diagram. Then we complete that pattern with new
arrows, according to the rule
$$\begin{picture}(121,13)(0,0)
\put(0,0){\includegraphics{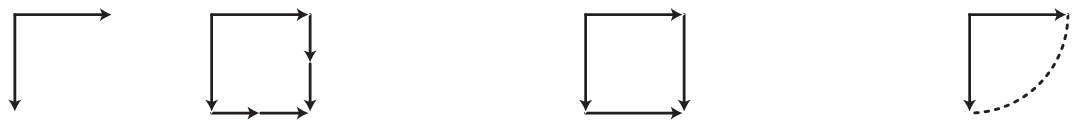}}
\put(-2,5){$\ss\ii$}
\put(5,12){$\scriptstyle\ss\jj$}
\put(10,4){$\to$}

\put(17,5){$\ss\ii$}
\put(24,12){$\ss\jj$}
\put(22,-1.5){$\ss\jj$}
\put(26,-1.5){$\ss\ii$}
\put(31.5,8){$\ss\ii$}
\put(31.5,3){$\ss\jj$}
\put(35,7){for $\vert\ii\!-\!\jj\vert{=}1$,}
\put(37,3){(type I)}

\put(55.5,5){$\ss\ii$}
\put(63,12){$\ss\jj$}
\put(69.5,5){$\ss\ii$}
\put(63,-1.5){$\ss\jj$}
\put(73,7){for $\vert\ii\!-\!\jj\vert{\ge}2$,}
\put(75,3){(type II)}

\put(94.5,5){$\ss\ii$}
\put(101,12){$\ss\jj$}
\put(105,2){$\ew$}
\put(109,7){for $\ii{=}\jj$,}
\put(107.5,3){(type III)}
\end{picture}$$
with the convention that $\ew$-labeled dotted arcs, dotted arcs in the diagrams, are
subsequently ignored. For instance, the reversing diagram associated with the above
reversing sequence~$(*)$ is displayed in Figure~\ref{F:Reversing}.

\begin{rema}
Because of the $\ew$-labelled arcs, the above patterns are not the most general ones
appearing in a reversing diagram. The general patterns are actually
$$\begin{picture}(121,13)(0,0)
\put(0,0){\includegraphics{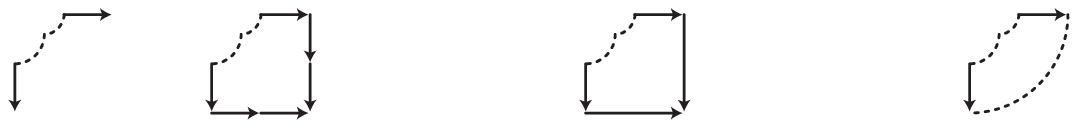}}
\put(2,9){$\ew$}
\put(-2.5,3){$\ss\ii$}
\put(6.5,12){$\ss\jj$}
\put(10,4){$\to$}

\put(22,9){$\ew$}
\put(17,3){$\ss\ii$}
\put(26.5,12){$\ss\jj$}
\put(22,-1.5){$\ss\jj$}
\put(26,-1.5){$\ss\ii$}
\put(31.5,8){$\ss\ii$}
\put(31.5,3){$\ss\jj$}
\put(35,7){for $\vert\ii\!-\!\jj\vert{=}1$,}
\put(37,3){(type I)}

\put(60,9){$\ew$}
\put(55.5,3){$\ss\ii$}
\put(65.5,12){$\ss\jj$}
\put(69.5,5){$\ss\ii$}
\put(63,-1.5){$\ss\jj$}
\put(73,7){for $\vert\ii\!-\!\jj\vert{\ge}2$,}
\put(75,3){(type II)}

\put(99,9){$\ew$}
\put(94.5,3){$\ss\ii$}
\put(103.5,12){$\ss\jj$}
\put(105,2){$\ew$}
\put(109,7){for $\ii{=}\jj$.}
\put(107.5,3){(type III)}
\end{picture}$$
\end{rema}

\begin{figure}[tb]
\begin{picture}(35,21)(0,1)
\put(0,0){\includegraphics{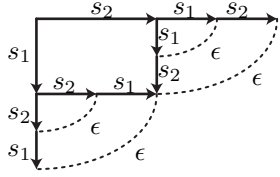}}
\put(-3,2){$\ss1$}
\put(-3,7){$\ss2$}
\put(-3,15){$\ss1$}
\put(8,21){$\ss2$}
\put(19,21){$\ss1$}
\put(26,21){$\ss2$}
\put(17,17.5){$\ss1$}
\put(17,12.5){$\ss2$}
\put(3,11){$\ss2$}
\put(11,11){$\ss1$}
\put(14,1.5){$\ew$}
\put(8,5){$\ew$}
\put(24,15){$\ew$}
\put(29.5,11.5){$\ew$}
\end{picture}
\caption{\sf Reversing diagram associated with the sequence~$(*)$, \ie, the reversing
diagram of the pair $(\ss1\ss2\ss1, \ss2\ss1\ss2)$. Metric aspects are ignored, so the
reversing diagram is always considered up to a piecewise affine deformation.}
\label{F:Reversing}
\end{figure}

In this way, we associate with every reversing sequence a reversing diagram. Conversely, it is easy to see that, starting with a diagram as above, we can recover a (not
necessarily unique) reversing sequence by reading the labels of the various paths going
from the bottom-left corner to the top-right corner, and using the convention that a vertical
$\ss\ii$-labeled edge contributes~$\sss\ii$. 

By construction, subword reversing may be applied to any initial word consisting of
letters~$\ss\ii$ and~$\sss\ii$, and not only to words of the form~$\sym\uu \vv$ where
$\uu$ and~$\vv$ are equivalent reduced expressions. So there is no ambiguity in the
following notion.

\begin{defi}
If $\uu, \vv$ a	are expressions, the \emph{reversing diagram for~$(\uu, \vv)$} is the
reversing diagram starting with the word~$\sym\uu \vv$. 
\end{defi}

By construction, the reversing diagram for the pair~$(\uu, \vv)$ starts with a vertical
down-oriented path labeled~$\uu$ and a horizontal right-oriented path labeled~$\vv$
starting from a common source. For instance, the diagram of Figure~\ref{F:Reversing} is the
reversing diagram for the pair~$(\ss1\ss2\ss1, \ss2\ss1\ss2)$.

By construction, a reversing diagram becomes a van Kampen diagram when all $\ew$-arcs
are collapsed---but we do \emph{not} do it, and keep the diagrams as they are now,
insisting that they are drawn in a rectangular grid. A reversing diagram contains three types
of tiles:

- type~I tiles, which are hexagons, and correspond to type~I braid
relations, 

- type~II tiles, which are squares, and correspond to type~II braid relations,

- type~III tiles, which are digons, and correspond to free group relations $\sss\ii \ss\ii =
1$. The latter will be called \emph{trivial}.

The connection between subword reversing and the problem of recognizing equivalent
reduced expressions of a permutation is given by the following result.

\begin{prop} \cite{Dff}
\label{P:Complete}
$(i)$ For all reduced expressions~$\uu, \vv$, there exists a unique pair of reduced
expressions~$\uu', \vv'$ such that $\sym\uu \vv$ reverses to~$\vv'\sym{\uu'}$.

$(ii)$ Two reduced expressions $\uu, \vv$ represent the same permutation if and only if\/
$\sym\uu \vv$ reverses to the empty word.
\end{prop}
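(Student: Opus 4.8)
The statement to prove is Proposition~\ref{P:Complete}, the completeness of subword reversing for reduced expressions of permutations. Since the excerpt attributes this to \cite{Dff}, the proof here should really be a self-contained argument rather than a citation, so let me sketch how I would establish both parts.

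\medskip

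The plan is to prove part $(i)$ first --- the \emph{existence and uniqueness of a terminating reversing from~$\sym\uu\vv$} --- and then derive part $(ii)$ as a consequence. For existence, the key point is termination of the reversing process on inputs of the form $\sym\uu\vv$ with $\uu,\vv$ reduced: I would introduce a complexity measure on extended expressions, essentially counting the number of pairs $(\sss\ii,\ss\jj)$ with the $\sss$-letter to the left of the $\ss$-letter (``inversions'' in the bookkeeping sense), weighted appropriately so that each type~I, II, or III step strictly decreases it. The subtle case is type~I, which \emph{increases} the raw letter count by two (from $\sss\ii\ss\jj$ to $\ss\jj\ss\ii\sss\jj\sss\ii$), so the measure must be chosen with care --- one natural choice uses the fact that $\uu,\vv$ are reduced, hence correspond to braid diagrams where any two strands cross at most once, to bound how many new subwords $\sss{}\,\ss{}$ a single step can create. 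Alternatively, and more in the spirit of this paper, one can argue geometrically: the reversing diagram being built sits inside a fixed rectangular grid determined by the braid diagrams $\DD_\uu$ and $\DD_\vv$, with each strand of one diagram crossing each strand of the other at most once (by reducedness), so the total number of tiles is bounded a priori by roughly $\ell^2$, which forces termination. The terminal word, having no subword $\sss\ii\ss\jj$, must have the form $\vv'\sym{\uu'}$ with $\vv',\uu'$ ordinary expressions; that $\vv',\uu'$ are reduced follows because each reversing step, read as a braid/group relation, preserves the represented group element, and a non-reduced output would contradict the length bounds coming from the grid.

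\medskip

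For uniqueness I would invoke confluence of the rewriting system~$\rev$. The natural route is to verify \emph{local confluence} on the critical pairs --- the minimal overlapping patterns $\sss\ii\ss\jj\ss\kk$ and $\sss\ii\sss\jj\ss\kk$ where two reversing steps compete --- checking in each case (sorted by the relative positions of $\ii,\jj,\kk$ on the line, i.e.\ which of $|\ii-\jj|,|\jj-\kk|,|\ii-\kk|$ equal $1$, which are $\geq 2$, which are $0$) that the two one-step reducts reverse to a common expression. This is a finite but genuinely tedious case analysis; the braid relations~(I) and~(II) and the free cancellation~(III) are exactly what make the diamonds close, and the cube condition / Newman's lemma then upgrades local confluence plus termination to global confluence, giving a \emph{unique} normal form, hence the uniqueness of $(\vv',\uu')$. \textbf{This case-checking of critical pairs is the main obstacle} --- not conceptually deep, but it is where all the real work lives, and one must be scrupulous that the geometric picture (closing an open wedge of arrows in the grid) matches the algebraic rewriting at every configuration, including the degenerate ones involving $\ew$-arcs described in the Remark.

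\medskip

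Finally, part $(ii)$ follows from $(i)$ together with the standard correspondence between reversing sequences and van Kampen diagrams already sketched in the text. If $\sym\uu\vv$ reverses to~$\ew$, then reading the associated reversing diagram (with $\ew$-arcs collapsed) gives a van Kampen diagram for~$(\uu,\vv)$, hence $\uu$ and~$\vv$ represent the same permutation. Conversely, if $\uu$ and~$\vv$ are equivalent reduced expressions, then by the classical fact recalled in the Introduction they are connected by braid relations, so a van Kampen diagram for~$(\uu,\vv)$ exists; translating its tiling into a reversing sequence shows $\sym\uu\vv$ reverses to~$\ew$ --- or, more directly, apply $(i)$: $\sym\uu\vv$ reverses to some $\vv'\sym{\uu'}$ with $\uu',\vv'$ reduced, and since reversing preserves the represented group element, $\uu'$ and $\vv'$ represent $\uu\inv\vv = 1$ in the symmetric group; reducedness then forces $\uu'=\vv'=\ew$, so the terminal word is empty. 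The only point requiring a word of care is that ``represents the identity permutation'' must be promoted to ``equals $\ew$'' for a \emph{reduced} expression, which is immediate since a reduced expression of the identity has length $0$.
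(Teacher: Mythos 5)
The paper does not prove this proposition itself: it takes the easy direction of $(ii)$ (a reversing sequence ending in~$\ew$ yields a van Kampen diagram, hence equivalence) and explicitly defers everything else to~\cite{Dff}, noting that the proof ``relies on the so-called Garside theory of braids.'' Your attempt to make it self-contained breaks down at exactly the points where that theory is needed. The main gap is termination. You claim the reversing diagram of~$(\uu,\vv)$ ``sits inside a fixed rectangular grid'' determined by $\DD_\uu$ and~$\DD_\vv$ and therefore has at most roughly $\ell^2$ tiles. This is false: each type~I step creates new edges that subdivide the grid, so the diagram grows as it is built. The paper's own Proposition~\ref{P:Quartic} exhibits reduced length~$\ell$ expressions with $\compl(\uu,\vv)\ge\frac43\ell^4$, even a polynomial bound on~$\compl$ is open (Conjecture~\ref{C:UpperNL}), and the only proved general bound is the exponential one of Proposition~\ref{P:UpperRev}. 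So no a~priori quadratic tile count exists, and the ``suitably weighted inversion count'' you gesture at is never produced --- you yourself flag that type~I increases the letter count, which is precisely the obstruction. Termination is obtained in~\cite{Dff} from the existence of common right multiples (Garside's~$\Delta_\nn$), not from a counting argument. Conversely, the confluence half is easier than you make it: two distinct redexes $\sss\ii\ss\jj$ in a word are necessarily disjoint (a shared letter would have to be both barred and unbarred), so there are no genuine critical pairs, local confluence is immediate, and Newman's lemma needs only termination.

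The deduction of $(ii)$ from $(i)$ also contains an algebraic error. From the reversing of $\sym\uu\vv$ to $\vv'\sym{\uu'}$ one gets $\uu\inv\vv=\vv'\,(\uu')\inv$ in the group, so if $\uu\equiv\vv$ then $\uu'\equiv\vv'$: they represent the \emph{same} permutation, not the identity. Since equivalent nonempty reduced expressions abound, this does not force $\uu'=\vv'=\ew$; to conclude one needs the further Garside-theoretic fact that $\uu\vv'$ represents the least upper bound of $\uu$ and~$\vv$ (as stated later in the paper), which equals~$\uu$ when $\uu\equiv\vv$, whence $\vv'$ has length~$0$. Your alternative route --- translating an arbitrary van Kampen diagram for $(\uu,\vv)$ into a reversing sequence --- is precisely what the paper warns against: at most two edges leave any vertex of a reversing diagram, a property that fails for general van Kampen diagrams such as the top diagram of Figure~\ref{F:Van}.
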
 

In the case of~$(ii)$, one implication is clear: by construction, a reversing sequence
from~$\sym\uu
\vv$ to the empty word gives a reversing diagram for~$(\uu, \vv)$ that concludes with
$\ew$-arcs everywhere on the bottom and the right, hence, after collapsing the
$\ew$-edges, it gives a van Kampen diagram for~$(\uu, \vv)$, thus proving that $\uu$
and~$\vv$ are equivalent. The converse implication is not obvious, as not every van
Kampen diagram comes from a reversing diagram. The specific point is that, in a reversing
diagram, two edges at most start from any vertex, a property that fails in the top diagram of
Figure~\ref{F:Van}: so that diagram is certainly not associated with a reversing. The proof of
Proposition~\ref{P:Complete}---which actually extends to arbitrary braids---relies on the
so-called Garside theory of braids~\cite{Gar, Eps}.

In terms of reversing diagrams,  the situation is as follows. In the particular case when $\uu$
and~$\vv$ are equivalent reduced expressions, the reversing diagram for $(\uu, \vv)$
finishes for $\ew$-labeled arcs everywhere, and collapsing all these $\ew$-labeled arcs yields
a van Kampen diagram for~$(\uu, \vv)$. In general, if $\uu, \vv$ are arbitrary expressions,
then the reserving diagram for $(\uu, \vv)$ is still finite, and it finishes with arrows forming
a word of the form~$\vv' \sym{\uu'}$ where $\uu', \vv'$ are two expressions that need not
be empty. Then collapsing all the $\ew$-labeled arcs yields a van Kampen diagram for the
pair~$(\uu\vv', \vv\uu')$. It can be shown that, if $\uu$ and $\vv$ are reduced, then
$\uu\vv'$ and~$\vv\uu'$ are reduced and equivalent---the
latter point is obvious as, by construction, the reversing
diagram for~$(\uu,\vv)$ provides a van Kampen diagram for
$(\uu\vv', \vv\uu')$---and the permutation represented by~$\uu\vv'$ and~$\vv\uu'$
is the least upper bound of the permutations represented by~$\uu$ and~$\vv$ with respect
to the weak order of~$\Sym_\nn$~\cite{BjB}.

In every case, the number of nontrivial tiles in the reversing diagram or, equivalently, the
number of nontrivial steps in an associated reversing sequence, is well defined.

\begin{defi}
Assume that $\uu, \vv$ are reduced expressions. The \emph{reversing
complexity} of~$(\uu, \vv)$, denoted~$\compl(\uu,\vv)$, is the number of
nontrivial tiles in the reversing diagram for~$(\uu, \vv)$.
\end{defi}

Equivalently, the reversing complexity~$\compl(\uu,\vv)$ is the number of nontrivial
steps in a  reversing sequence from~$\sym\uu \vv$ to a word of the form~$\vv'
\sym{\uu'}$. By the above remarks, a reversing diagram for~$(\uu,\vv)$ with
$\NN$~nontrivial tiles provides a van Kampen diagram for~$(\uu\vv', \vv\uu')$ with
$\NN$~tiles, where $\vv' \sym{\uu'}$ is the final word of the reversing process. So we
always have
\begin{equation}
\label{E:Ineq0}
\dist(\uu\vv',\vv\uu') \le \compl(\uu,\vv).
\end{equation}
In particular, when we start with equivalent reduced expressions~$\uu, \vv$, we have
\begin{equation}
\label{E:Ineq}
\dist(\uu,\vv) \le \compl(\uu,\vv),
\end{equation}
since, in this case, the final expressions~$\uu'$ and~$\vv'$ are empty by
Proposition~\ref{P:Complete}$(ii)$.

We shall now discuss the converse inequality, \ie, the question of
whether subword reversing, viewed as a particular strategy for finding derivations between
equivalent expressions of a permutation, is efficient, or even possibly optimal.

\begin{prop}
\label{P:NotOptimal}
Subword reversing is not always optimal: There exist equivalent reduced expressions~$\uu,
\vv$ satisfying $\dist(\uu,\vv) < \compl(\uu,\vv)$.
\end{prop}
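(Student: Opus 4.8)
The plan is to exhibit an explicit pair of equivalent reduced expressions~$\uu, \vv$ for which the reversing diagram is forced to contain a strictly larger number of nontrivial tiles than the combinatorial distance~$\dist(\uu,\vv)$, and to certify both quantities independently. For the lower side, I would use the reversing diagram directly: once~$\uu$ and~$\vv$ are fixed, the reversing sequence from~$\sym\uu \vv$ is deterministic up to the order in which subwords are reversed, and a standard confluence/diamond argument (as in~\cite{Dff}) shows that the number of nontrivial tiles is independent of that order. Hence~$\compl(\uu,\vv)$ can be computed by simply running the algorithm on~$\sym\uu \vv$ once. For the upper side, I would produce an explicit van Kampen diagram for~$(\uu,\vv)$ with fewer tiles and invoke Proposition~\ref{P:MinimalSep}: if in that hand-built diagram any two separatrices cross at most once, the diagram is optimal, so its tile count equals~$\dist(\uu,\vv)$; comparing the two numbers then yields the strict inequality.

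Concretely, the natural candidate is the flip permutation on a small number of strands, or a close relative, since Example~\ref{X:Flip} already shows that its extreme reduced expressions~$\uu_\nn, \vv_\nn$ have mirror-image name sequences, which tends to force many crossings. The smallest case where reversing visibly wastes effort is around~$\nn = 4$ (or~$\nn=5$): I would take~$\uu$ and~$\vv$ to be the two reduced expressions of the half-turn~$\Delta$-permutation that are ``farthest apart'' in the reversing sense, compute the reversing diagram of~$(\uu,\vv)$ explicitly, and count its hexagons and squares; then I would separately draw an optimal van Kampen diagram (for instance the symmetric ``staircase'' tiling), check the single-crossing condition on its separatrices, and read off~$\dist(\uu,\vv)$. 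The first part of the paper's own machinery, namely Proposition~\ref{P:Minimal} and Proposition~\ref{P:MinimalSep}, does all the heavy lifting for the optimality certificate, so the only genuine computation is the bookkeeping of the reversing run.

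The main obstacle I expect is not conceptual but combinatorial: one must actually find a pair for which the reversing diagram is strictly suboptimal, and then verify \emph{both} that the reversing count is as claimed (which requires either running the algorithm carefully to completion or arguing structurally that some nontrivial tile is ``redundant'', e.g.\ that two tiles in the reversing diagram share a name, so Proposition~\ref{P:Minimal}'s converse intuition tells us it cannot be optimal) \emph{and} that the competing diagram really is optimal (which is where the separatrix criterion must be checked edge by edge). A clean way to package the redundancy is to point to a specific pair of tiles in the reversing diagram with the same name~$\Q\pp\qq\ppp\qqq$ or~$\T\pp\qq\rr$, exactly as illustrated in the bottom diagram of Figure~\ref{F:Van}; that figure in fact already displays a reversing-type phenomenon (two tiles named~$\Q1423$), so the example there, or a minor variant of it, is almost certainly the intended witness. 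I would therefore structure the proof as: (1) fix~$\uu = \ss1\ss2\ss1\ss3\ss2\ss1$, $\vv = \ss3\ss2\ss3\ss1\ss2\ss3$; (2) exhibit the optimal diagram (top of Figure~\ref{F:Van}) and note by Proposition~\ref{P:MinimalSep} that~$\dist(\uu,\vv) = 6$; (3) exhibit the reversing diagram (bottom of Figure~\ref{F:Van}, or rather the genuine reversing diagram for this pair) and count~$7$ nontrivial tiles, using confluence to rule out a shorter reversing; (4) conclude~$6 = \dist(\uu,\vv) < \compl(\uu,\vv) = 7$.

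The one point demanding care is step~(3): the bottom picture in Figure~\ref{F:Van} is drawn as an ordinary van Kampen diagram, not as a reversing diagram in the rectangular-grid normal form, so I would need to redraw the actual reversing diagram for~$(\uu,\vv)$ — pushing the~$\sss\ii$'s right and the~$\ss\jj$'s left from~$\sym\uu\,\vv = \sss1\sss2\sss3\sss1\sss2\sss1\,\ss3\ss2\ss3\ss1\ss2\ss3$ — and tally its type~I, type~II and type~III tiles. As long as that count exceeds~$6$ after discarding the trivial (digon) tiles, the proposition follows. If for this particular pair the reversing happens to be optimal after all, the fallback is to enlarge to~$\nn=5$ and use~$\uu_5, \vv_5$ from Example~\ref{X:Flip}, where the mirror-image structure of the name sequences makes it routine (via the inversion-count Remark after Corollary~\ref{C:Lower}) to force a gap.
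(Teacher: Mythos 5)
Your proposal follows essentially the same route as the paper: the same pair $\uu = \ss1\ss2\ss1\ss3\ss2\ss1$, $\vv = \ss3\ss2\ss3\ss1\ss2\ss3$, with $\dist(\uu,\vv)=6$ certified by the optimal van Kampen diagram via Proposition~\ref{P:Minimal}, and the reversing diagram computed directly to show $\compl(\uu,\vv)>6$. The only correction is numerical: the reversing diagram for this pair contains eight nontrivial tiles (four hexagons and four squares), not seven, so the gap is $6<8$.
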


\begin{proof}
Consider the $4$-expressions~$\uu = \ss1 \ss2 \ss1 \ss3 \ss2 \ss1$ and $\vv =
\ss3 \ss2 \ss3 \ss1 \ss2 \ss3$, two expressions of the flip permutation~$\phi_4$
of~$\Sym_4$. Together with Proposition~\ref{P:Minimal}, the top
diagram in~Figure~\ref{F:Van} gives $\dist(\uu,\vv) = 6$. On the other hand, the reversing
diagram of Figure~\ref{F:Dist} gives $\compl(\uu,\vv) = 8$.
\end{proof}

\begin{figure}[htb]
\begin{picture}(80,54)(0,0)
\put(0,0){\includegraphics{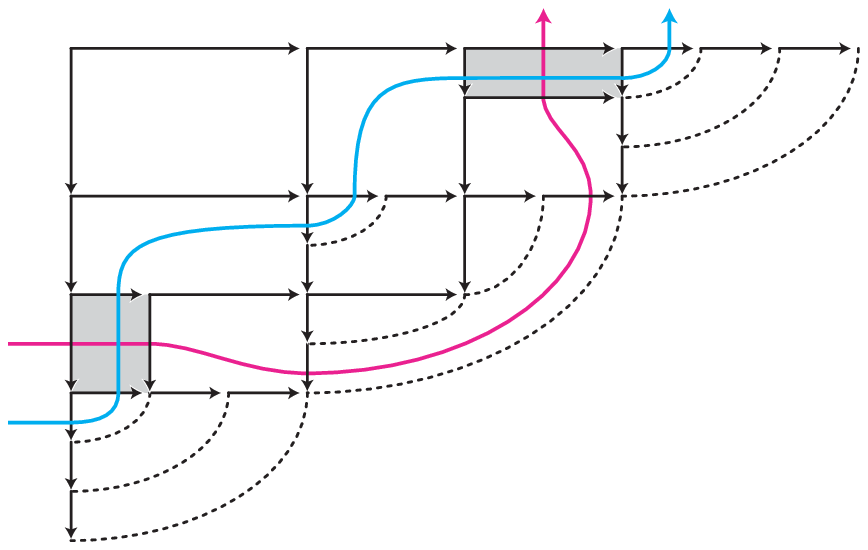}}
\put(2,3){$\ss1$}
\put(2,8){$\ss2$}
\put(2,13){$\ss3$}
\put(2,21){$\ss1$}
\put(15,21){$\ss1$}
\put(31,22){$\ss1$}
\put(31,18){$\ss2$}
\put(2,30){$\ss2$}
\put(2,43){$\ss1$}
\put(31,43){$\ss1$}
\put(31,32){$\ss2$}
\put(31,27.5){$\ss3$}
\put(17,52){$\ss3$}
\put(17,36.5){$\ss3$}
\put(32,36.5){$\ss2$}
\put(40,36.5){$\ss1$}
\put(48,36.5){$\ss3$}
\put(55.5,36.5){$\ss2$}
\put(21,26.5){$\ss2$}
\put(8,26.5){$\ss3$}
\put(37,26.5){$\ss1$}
\put(8,16.5){$\ss3$}
\put(16,16.5){$\ss2$}
\put(24,16.5){$\ss1$}
\put(37,52){$\ss2$}
\put(50,52){$\ss3$}
\put(64,52){$\ss1$}
\put(72,52){$\ss2$}
\put(80,52){$\ss1$}
\put(46.5,48){$\ss1$}
\put(63,48){$\ss1$}
\put(46.5,40){$\ss2$}
\put(46.5,30){$\ss3$}
\put(46.5,40){$\ss2$}
\put(63,42.5){$\ss2$}
\put(63,37.5){$\ss1$}
\put(-10,20){$\SEP23$}
\put(-10,12){$\SEP14$}
\end{picture}
\caption{\sf Reversing diagram for
$\sss1\sss2\sss3\sss1\sss2\sss1\ss3\ss2\ss3\ss1\ss2\ss1$. There are eight nontrivial tiles
(four type~I hexagons and four type~II squares), giving $\compl(\ss1\ss2\ss1\ss3\ss2\ss1,
\ss3\ss2\ss3\ss1\ss2\ss1) = 8$. Collapsing the $\ew$-edges in the above diagram yields
the bottom van Kampen diagram of Figure~\ref{F:Van}. The failure of optimality is
witnessed by the two intersections of the separatrices $\SEP14$ and
$\SEP23$.}
\label{F:Dist}
\end{figure}

\subsection{An optimality criterion}

Despite the negative result of Proposition~\ref{P:NotOptimal}, experiments show that
subword reversing is often an efficient strategy. What we do now is to establish sufficient
criteria for recognizing that reversing is possibly optimal. Of course, we shall say that a
reversing diagram~$\DDD$ is \emph{optimal} if the van Kampen diagram obtained by
collapsing the $\ew$-labeled arcs in~$\DDD$ is optimal, \ie, if it realizes the combinatorial
distance between the boundary expressions.

\begin{prop}
\label{P:Optimal}
A reversing diagram containing no digon, \ie, containing only tiles of type~I and~II, is
optimal. 
\end{prop}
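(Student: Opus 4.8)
The plan is to derive Proposition~\ref{P:Optimal} from the separatrix criterion, Proposition~\ref{P:MinimalSep}. First note that a reversing diagram containing no digon contains no $\ew$-labeled arc at all (such arcs arise only from type~III tiles and their propagation), so it \emph{is} already a van Kampen diagram: nothing has to be collapsed. Hence it suffices to establish the following claim: \emph{in a reversing diagram~$\DDD$ without digon, any two separatrices cross at most once}. Granting the claim, Proposition~\ref{P:MinimalSep} immediately gives that $\DDD$ is optimal. (Equivalently, in view of the correspondence exhibited in the proof of Proposition~\ref{P:MinimalSep}, it is enough to show that no two tiles of~$\DDD$ carry the same name, which is the hypothesis of Proposition~\ref{P:Minimal}.)

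To prove the claim, I would use the rectangular-grid structure of reversing diagrams: every edge is horizontal (right-oriented) or vertical (down-oriented), and at every vertex at most one horizontal and at most one vertical edge start, and likewise at most one of each kind ends. Drawing the two admissible tiles — the hexagon named $\T\pp\qq\rr$ and the square named $\Q\pp\qq\ppp\qqq$ of Figure~\ref{F:Names} — on this grid and locating the diametrically opposite edges, one reads off the possible directions of a separatrix segment: inside a square it runs straight across (joining the two opposite horizontal edges, or the two opposite vertical edges), and inside a hexagon it realizes one of the three ``long diagonals''. Inspecting these possibilities shows that a separatrix segment is never oriented ``down and to the left''; hence along any separatrix the horizontal coordinate is never decreased while the vertical one is increased. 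This monotonicity, together with the bounded-degree property, is the structural input.

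Now assume, for contradiction, that two separatrix arcs cross at least twice (they may belong to the same separatrix or to two distinct ones), and among all bigons bounded by two such arcs pick one, $B$, enclosing as few tiles as possible. Its two ``corners'' are the tiles hosting the two crossings, and by minimality every other separatrix meeting the interior of~$B$ enters through one side of~$B$ and leaves through the other, meeting each side exactly once. The final step is to analyse the subdiagram of~$\DDD$ filling~$B$, which is itself obtained by a sequence of reversing steps, each of type~I, II or~III: the two bounding arcs, together with the monotone behaviour above, leave the strip between them too narrow to be filled by hexagons and squares alone, so one of those steps must be a type~III step, i.e., $B$ must contain a digon. This contradicts the digon-freeness of~$\DDD$; hence no two separatrix arcs cross twice, and Proposition~\ref{P:MinimalSep} finishes the proof. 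The delicate point, and the one I would expect to absorb most of the work, is exactly this last step — showing that an innermost separatrix-bigon in a reversing diagram cannot be filled without creating a digon — which is where the grid structure and the precise shapes of the hexagonal and square tiles must be combined carefully.
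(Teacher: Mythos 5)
Your reduction to Proposition~\ref{P:MinimalSep} is exactly the paper's strategy, and your overall target --- showing that a separatrix bigon in a reversing diagram forces a digon --- is the right statement to prove. But the step that carries all the weight is precisely the one you leave unproved, and the reason you sketch for it (``the strip between the two arcs is too narrow to be filled by hexagons and squares alone'') is not a viable argument: the region enclosed between two consecutive crossings of two separatrices can contain arbitrarily many tiles and be arbitrarily ``wide'', so no narrowness obstruction is available. Likewise, the innermost-bigon reduction does not obviously help, since the contradiction the paper derives is local to the two separatrices themselves and does not pass through an analysis of the other separatrices meeting the bigon.

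The missing idea is an orientation bookkeeping for separatrices. Classify each segment of a separatrix inside a tile as \emph{horizontal} or \emph{vertical} according to the type of edge it crosses; an induction on the number of tiles shows that every vertical edge is crossed from left to right and every horizontal edge from bottom to top, and --- this is the key point --- that among the three tile types only a digon can turn a separatrix from horizontal to vertical (hexagons can only turn vertical into horizontal, squares preserve the orientation). Moreover, when two separatrices cross in a hexagon or a square, one may arrange that one of them exits horizontally and the other vertically, and between the two crossings the vertical one stays above the horizontal one; for them to meet a second time the roles must be exchanged, i.e., the horizontal one must become vertical again, which requires a digon. Without this orientation analysis (or an equivalent substitute), your proof does not go through; with it, your appeal to Proposition~\ref{P:MinimalSep} completes the argument exactly as in the paper.
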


\begin{proof}
Assume that $\DDD$ is the reversing diagram for~$(\uu, \vv)$.
By definition, the separatrices of~$\DDD$ start from the edges corresponding
to~$\uu$, \ie, here, from the left. Then an induction on the number of tiles shows that only
the following orientations may appear in the tiles of~$\DDD$.
$$\begin{picture}(74,20)(0,-2)
\put(0,0){\includegraphics{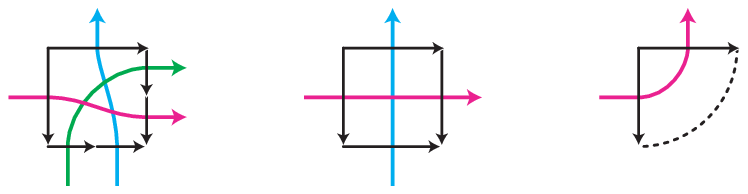}}
\put(-4,8){$\Sep$}
\put(26,8){$\Sep$}
\put(56,8){$\Sep$}
\put(4,-3){$\Sep''$}
\put(10,-3){$\Sep'$}
\put(38,-3){$\Sep'$}
\end{picture}$$
Hence every vertical edge of~$\DDD$ is crossed by a separatrix from left to right, and every
horizontal edge is crossed from bottom to top. Moreover, we see that digons are the only
tiles that can change the orientation of a separatrix from horizontal to vertical---whereas
only hexagons can change the orientation from vertical to horizontal. Also we see that, if
two separatrices cross in a tile, then, when entering that tile, at least one of them is vertical.

Now assume that two separatrices $\Sep, \Sep'$ cross at least twice in~$\DDD$,
Two cases may occur, according to whether the names of these separatrices involve three
or four integers. Assume first that there exist $\pp, \qq, \rr$ satisfying $\Sep =
\SEP\pp\qq$ and~$\Sep' = \SEP\pp\rr$. Then $\Sep$ and~$\Sep'$ can cross
only in hexagons named~$\T\pp\qq\rr$, in which case, putting $\Sep'' =
\SEP\qq\rr$, the separatrices~$\Sep$ and~$\Sep''$, as well as $\Sep'$ and~$\Sep''$,
cross too. Let~$H_1$ (\resp $H_2$) be the first (\resp second) hexagon where
$\Sep$, $\Sep'$ and~$\Sep''$ cross. Without loss of generality, we may assume that $\Sep$
is horizontal when entering~$H_1$  (hence also when exiting it) and that $\Sep'$ is
vertical when exiting~$H_1$ (hence also when entering it). Then $\Sep'$ remains
above~$\Sep$ (measured from the bottom of the diagram)---with $\Sep''$ lying
in between---until they enter the hexagon~$H_2$. Therefore, the only possibility is that
$\Sep'$ is horizontal when entering~$H_2$ (from the left), whereas $\Sep$ is vertical when
entering~$H_2$ (from the bottom). This is possible only if a digon changes the orientation
of~$\Sep$ from horizontal to vertical between~$H_1$ and~$H_2$.

Assume now that there exist $\pp, \qq, \ppp, \qqq$ satisfying $\Sep =
\SEP\pp\qq$ and~$\Sep' = \SEP\ppp\qqq$. Then $\Sep$ and~$\Sep'$ can cross
only in squares named~$\Q\pp\qq\ppp\qqq$. Let $S_1$ (\resp $S_2$) be the first
(\resp second) square where $\Sep$ and~$\Sep'$ cross. Without loss of generality, we can
assume that $\Sep$ is horizontal after (and before)~$S_1$, and $\Sep'$ is vertical. Then, as
in the case of hexagons, $\Sep'$ remains above~$\Sep$ until they enter the square~$S_2$.
Therefore, $\Sep'$ is horizontal when entering~$S_2$ (from the
left), whereas $\Sep$ is vertical when entering~$S_2$ (from the bottom). Hence a digon
changes the orientation of~$\Sep$ from horizontal to vertical between~$S_1$ and~$S_2$.

So, in any case, two separatrices may cross twice only if there is a digon in~$\DDD$. Then
we apply Proposition~\ref{P:MinimalSep}.
\end{proof}

The previous result can be improved by showing that some digons are harmless and
can be ignored. Indeed, consider a pattern of the form $\sss\ii \ss{\ii+1} \ss\ii$. Then we
have
$\sss\ii \ss{\ii+1} \ss\ii \rev \ss{\ii+1} \ss\ii \sss{\ii+1} \sss\ii \ss\ii \rev \ss{\ii+1} \ss\ii
\sss{\ii+1}$,
corresponding to an hexagon with an appended digon
\vrule width0pt depth7mm
$\begin{picture}(27,11)(-4,4)
\put(0,0){\includegraphics{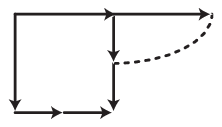}}
\put(-3,5){$\ss\ii$}
\put(11,8){$\ss\ii$}
\put(11,3){$\ss{\ii+1}$}
\put(14,12){$\ss\ii$}
\put(3,12){$\ss{\ii+1}$}
\put(-1,-1.5){$\ss{\ii+1}$}
\put(6,-1.5){$\ss\ii$}
\put(19,5){$\ew$}
\end{picture}$
in the diagram. Let us introduce two new types of hexagonal tiles, namely, for
$\vert\ii-\jj\vert = 1$, 
$$\begin{picture}(72,18)(0,0)
\put(15.5,5){\includegraphics{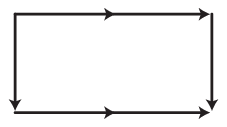}}
\put(0,10){type I': }
\put(13,10){$\ss\ii$}
\put(20,17.5){$\ss\jj$}
\put(30,17.5){$\ss\ii$}
\put(20,3.5){$\ss\jj$}
\put(30,3.5){$\ss\ii$}
\put(37,10){$\ss\jj$,}
\put(45,10){type I'': }
\put(60,0){\includegraphics{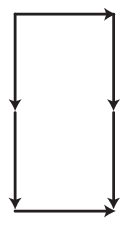}}
\put(57.5,15){$\ss\ii$}
\put(57.5,5){$\ss\jj$}
\put(71.5,15){$\ss\ii$}
\put(71.5,5){$\ss\jj$}
\put(64,22){$\ss\jj$}
\put(64,-2){$\ss\ii$}.
\end{picture}$$
Unsing such tiles amounts to replacing two adjacent tiles with one unique tile of the
new type, but they do not change anything in the rest of the diagram. In this way, we
obtain a new type of reversing diagrams that we call \emph{compacted}. Note that the
compacted diagram associated with an initial expression~$\sym\uu \vv$ need not be unique,
as there may be several ways of grouping the tiles, see Figure~\ref{F:Compact}. However, as
the situation after a tile of type~I' or~I'' is exactly the same as the situation after
the corresponding type~I tile followed by a type~III tile, the number of nontrivial tiles
is the same in any diagram associated with a given initial pair~$(\uu, \vv)$.

\begin{figure}[tb]
\begin{picture}(90,38)(0,0)
\put(2,0){\includegraphics{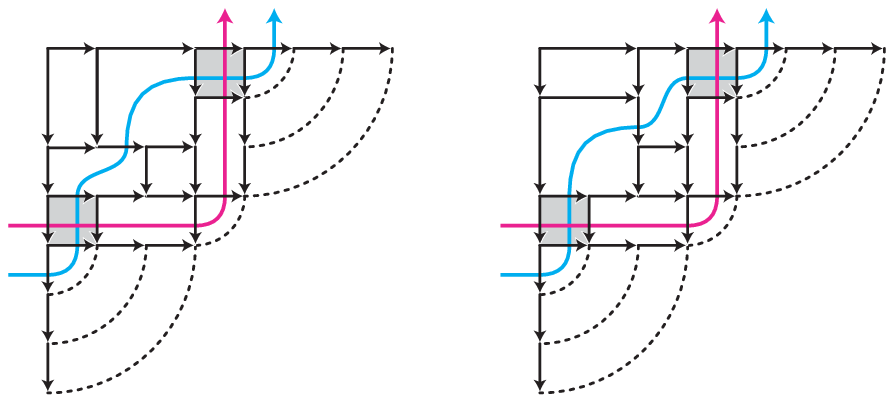}}
\put(2,3){$\ss1$}
\put(2,8){$\ss2$}
\put(2,13){$\ss3$}
\put(2,18){$\ss1$}
\put(2,23){$\ss2$}
\put(2,30){$\ss1$}
\put(6,37){$\ss3$}
\put(14,37){$\ss2$}
\put(20,37){$\ss3$}
\put(25,37){$\ss1$}
\put(31,37){$\ss2$}
\put(36,37){$\ss3$}
\put(-8,17){$\SEP23$}
\put(-8,12){$\SEP14$}
\put(52,3){$\ss1$}
\put(52,8){$\ss2$}
\put(52,13){$\ss3$}
\put(52,18){$\ss1$}
\put(52,23){$\ss2$}
\put(52,30){$\ss1$}
\put(56,37){$\ss3$}
\put(64,37){$\ss2$}
\put(70,37){$\ss3$}
\put(75,37){$\ss1$}
\put(81,37){$\ss2$}
\put(86,37){$\ss3$}
\put(42,17){$\SEP23$}
\put(42,12){$\SEP14$}
\end{picture}
\caption{\sf Two slightly different ways of compacting the reversing diagram of
Figure~\ref{F:Dist}.}
\label{F:Compact}
\end{figure}

\goodbreak
The expected improvement of Proposition~\ref{P:Optimal} is

\begin{prop}
\label{P:OptimalBis}
A compacted reversing diagram containing no digon, \ie, containing only tiles of types~I, I',
I'', and II, is optimal. 
\end{prop}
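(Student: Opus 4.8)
The plan is to run the argument of Proposition~\ref{P:Optimal} again, with the list of tiles enlarged by the two new types~I$'$ and~I$''$. As there, the target is the hypothesis of Proposition~\ref{P:MinimalSep}: I would show that in a compacted reversing diagram~$\DDD$ containing no digon, any two separatrices cross at most once; optimality then follows at once. (One should note that a compacted diagram with no digon has no $\varepsilon$-arcs, and that an I$'$- or I$''$-tile, once the $\sss\,$/$\,\ss$ distinction is forgotten, is simply an ordinary type~I braid-relation hexagon, so $\DDD$ is genuinely a van Kampen diagram for its boundary pair and Proposition~\ref{P:MinimalSep} does apply to it.)

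The only genuinely new input is the orientation analysis inside the two composite tiles, and here I would argue as follows. Redo the induction on the number of tiles from the proof of Proposition~\ref{P:Optimal}: one needs that every vertical edge of~$\DDD$ is still crossed by a separatrix from left to right and every horizontal edge from bottom to top, and one records the possible turns tile by tile. Type~II tiles turn no separatrix, and a type~I tile can only turn a separatrix from vertical to horizontal, exactly as before; the claim to add is that an I$'$- or~I$''$-tile turns no separatrix at all. The mechanism: such a tile is a type~I hexagon together with an appended digon, and the digon is built on one vertical output edge of the hexagon, which it shares with it; collapsing the digon removes that vertical edge (it becomes internal) and the remaining digon edge, carrying the same name, is horizontal. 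Hence the unique separatrix that turned inside the underlying hexagon (from vertical to horizontal) now has both of its endpoints on horizontal edges of the composite tile, so it goes straight through; the other two separatrices were already straight. In particular no I$'$- or~I$''$-tile produces a horizontal-to-vertical turn, and — exactly as in Proposition~\ref{P:Optimal} — such a turn can occur only in a digon, of which $\DDD$ has none.

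With this in hand I would repeat the double-crossing argument verbatim. Suppose two separatrices $\Sep$ and~$\Sep'$ crossed twice, first in a tile~$T_1$ and then in a tile~$T_2$; these tiles are hexagons named~$\T\pp\qq\rr$ if the names of $\Sep,\Sep'$ involve three integers, and squares named~$\Q\pp\qq\ppp\qqq$ if they involve four. Normalising so that $\Sep$ is horizontal and~$\Sep'$ vertical at~$T_1$, the same reasoning as in Proposition~\ref{P:Optimal} shows that $\Sep'$ stays above~$\Sep$ until~$T_2$, which forces $\Sep$ to enter~$T_2$ vertically, hence to turn from horizontal to vertical somewhere between~$T_1$ and~$T_2$. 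By the previous step this happens only in a digon, contradicting the hypothesis on~$\DDD$. So no two separatrices cross twice, and Proposition~\ref{P:MinimalSep} gives the conclusion.

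The main obstacle is the first step: carefully verifying that the I$'$- and I$''$-tiles introduce no harmful turn. This means drawing the three separatrices inside each of the two composite hexagonal tiles and keeping exact track of which of the six boundary edges is drawn vertically and which horizontally, and in which direction each separatrix crosses each; once this bookkeeping is correctly set up the check is routine, and it is the one point where the present proof differs from that of Proposition~\ref{P:Optimal}, the rest being a rerun.
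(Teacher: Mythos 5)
Your proposal is correct and follows essentially the same route as the paper: the paper's proof likewise consists of checking that the type I$'$ and I$''$ tiles change the horizontal--vertical orientation of no separatrix (it does so by simply exhibiting the possible orientation patterns in a figure) and then observing that the double-crossing argument of Proposition~\ref{P:Optimal} goes through unchanged. Your explanation of the mechanism is sound, with the one small caveat that in a type I$''$ tile the formerly turning separatrix ends up with both endpoints on \emph{vertical} edges (it is in type I$'$ that they are both horizontal); either way it goes straight through, so the conclusion stands.
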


\begin{proof}
The new tiles of type~I' and~I'' do not change the orientation of separatrices. Indeed, the
corresponding possibilities are
$$\begin{picture}(60,23)(0,2)
\put(0,0){\includegraphics{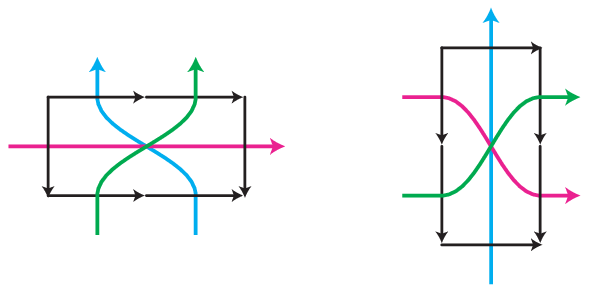}}
\end{picture}$$
None of these tiles changes the horizontal-vertical orientation of the separatrices, and,
therefore, their appearing in a reversing diagram does not affect the argument used in
the proof of Proposition~\ref{P:Optimal}. 
\end{proof}

An application of the above criterion will be mentioned in Remark~\ref{R:Optimal} below.

\subsection{Upper bounds}

Very little is known about the reversing complexity in general. In particular, the following
conjecture, which is the natural counterpart of Proposition~\ref{P:Upper}, remains open
at the moment---see~\cite{Aut} for partial results.

\begin{conj}
\label{C:UpperNL}
For all $\nn$-expressions~$\uu, \vv$ of length~$\ell$, the reversing complexity
\linebreak
$\compl(\uu,\vv)$ lies in~$O(\nn^2\ell)$.
\end{conj}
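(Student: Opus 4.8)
Conjecture~\ref{C:UpperNL} is open, and we can only describe the line of attack that looks most promising, together with the point at which it stalls.

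The plan is to estimate $\compl(\uu,\vv)$ by counting separatrix crossings in the reversing diagram~$\DDD$ for~$(\uu,\vv)$, in the spirit of the proof of Proposition~\ref{P:MinimalSep}. Every type~I tile of~$\DDD$ is the unique place where three prescribed separatrices pairwise meet, every type~II tile the unique place where two prescribed separatrices meet, and digons create no crossing; so, writing $X$ for the total number of pairwise crossings of separatrices in~$\DDD$, we get $\compl(\uu,\vv) \le X$ (in fact $\frac13 X \le \compl(\uu,\vv) \le X$). Since the separatrices of~$\DDD$ are indexed by the pairs~$\P\pp\qq$ that name an edge of~$\DDD$, and such a pair is necessarily an inversion of the permutation represented by the boundary of~$\DDD$, there are at most~$\binom\nn2$ of them; this already yields $X \le \binom{\binom\nn2}{2} = O(\nn^4)$, but no better unconditional bound seems available, and that $O(\nn^4)$ is exactly the gap with the conjectured estimate.

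To reach $O(\nn^2\ell)$ one would have to show that each separatrix takes part in only $O(\ell)$ crossings; summing over the $O(\nn^2)$ separatrices and dividing by two then gives $X = O(\nn^2\ell)$. Here the local analysis of the proof of Proposition~\ref{P:Optimal} is available: along any separatrix the horizontal/vertical orientation alternates, a horizontal run being terminated only by a digon and a vertical run only by a hexagon, while squares and the tiles of type~I$'$, I$''$ preserve the orientation (Proposition~\ref{P:OptimalBis}), and during a horizontal \resp vertical run the separatrix moves strictly rightwards \resp upwards. Hence the number of tiles met by a given separatrix is controlled by the numbers of rows and columns of~$\DDD$ plus twice the number of digons lying on it, and the crux is to bound this last quantity---equivalently, to bound how many barred letters a single letter of~$\uu$ can spawn during reversing, or how often a separatrix can fold back through digons. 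Since reversing need not be optimal (Proposition~\ref{P:NotOptimal}), the distance bound of Proposition~\ref{P:Upper} cannot be substituted for this count: a genuine amortized analysis of the rewriting dynamics is required.

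The concrete form such an analysis could take is a potential function: attach to each extended expression~$\ww$ arising in the reversing sequence a weight~$\Phi(\ww)$ that is $O(\nn^2\ell)$ on the initial word~$\sym\uu\vv$ (where $\vert\uu\vert=\vert\vv\vert=\ell$) and that strictly decreases at every nontrivial reversing step. We expect the main obstacle to be the design of~$\Phi$ so that it still decreases when $\sss\ii\ss\jj$ is replaced by the longer word $\ss\jj\ss\ii\sss\jj\sss\ii$ at a type~I step---that is, absorbing the two extra letters created there---which is precisely the phenomenon responsible for the present exponential upper bound; see~\cite{Aut} for partial results in this direction.
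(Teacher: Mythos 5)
This statement is a conjecture: the paper offers no proof and explicitly says it ``remains open at the moment,'' referring to~\cite{Aut} only for partial results. You correctly recognize this and do not claim a proof, and your identification of the core obstruction---controlling how many digons a single separatrix can pass through, equivalently how many barred letters a letter can spawn, which is exactly why only an exponential bound (Proposition~\ref{P:UpperRev}) is currently available---matches the paper's own closing remarks about having ``no control on the number of hexagons and digons'' in a reversing diagram.

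There is, however, one concrete error in your discussion. You assert that counting separatrix crossings ``already yields $X \le \binom{\binom{\nn}{2}}{2} = O(\nn^4)$'' unconditionally. That bound presupposes that any two separatrices cross at most once, which is precisely what fails in general: in Figure~\ref{F:Dist} the separatrices $\SEP14$ and $\SEP23$ cross twice, and nothing in the setup bounds a priori the number of times a given pair of separatrices can meet in a reversing diagram. Indeed, if your $O(\nn^4)$ bound on $X$ were unconditional, then together with $\compl(\uu,\vv)\le X$ it would prove that the reversing complexity is polynomial---but the paper states explicitly that ``even the weaker result of the reversing complexity being polynomial is not known.'' So that step of your outline is not merely a gap but contradicts the known state of the problem; the correct statement is only that $X=O(\nn^4)$ \emph{if} every pair of separatrices crosses at most once, which is the (unproved) crux rather than a free starting point.
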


Even the weaker result of the reversing complexity being  polynomial is not known. By
adapting the method used for~\cite[Prop.~3]{Dhg}, one comes up with the weak result that,
if $\uu, \vv$ are length~$\ell$ expressions, then $\dist(\uu,\vv) \le C \cdot 81^\ell$
holds for some constant~$C$. Using a careful analysis of separatrices, one can obtain the
following improvement.

\begin{prop}
\label{P:UpperRev}
If $\uu, \vv$ are expressions of length~$\ell$, then we have $\dist(\uu,\vv) \le C \cdot
9^\ell$ for some constant~$C$.
\end{prop}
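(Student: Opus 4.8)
The plan is to bound the reversing complexity $\compl(\uu,\vv)$ rather than $\dist(\uu,\vv)$ directly: by~\eqref{E:Ineq0} the former dominates $\dist(\uu\vv',\vv\uu')$, and by~\eqref{E:Ineq} it dominates $\dist(\uu,\vv)$ whenever $\uu$ and $\vv$ are equivalent reduced, so an upper bound of the required shape for $\compl$ suffices. The starting remark is that the reversing diagram $\DDD$ for $(\uu,\vv)$ is drawn on a rectangular grid; if it occupies $R$ rows and $C$ columns of tiles, then, since each grid cell carries at most one tile,
\begin{equation*}
\compl(\uu,\vv)\ \le\ R\,C .
\end{equation*}
Thus the whole problem reduces to bounding the two dimensions $R$ and $C$. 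Inspecting the three rewrite patterns shows that a type~II tile changes neither dimension, a type~III tile only suppresses a horizontal or vertical track, and a type~I tile adds at most one row and at most one column; hence $R$ and $C$ are governed by the number of type~I tiles of $\DDD$, equivalently by the sizes of the complements $\uu',\vv'$.

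The engine for controlling these quantities is the separatrix analysis of Section~\ref{S:Sep} and of the proof of Proposition~\ref{P:Optimal}. Recall from there that in a reversing diagram every separatrix is a monotone staircase — it crosses vertical edges only from left to right and horizontal edges only from bottom to top — that a type~I tile records exactly a triple crossing of three separatrices and a type~II tile a simple crossing of two, and that, traced backwards, a separatrix must leave $\DDD$ through one of its initial (top‑left) boundary edges; since distinct separatrices carry distinct names, only polynomially many (in $\ell$) separatrices meet $\DDD$ at all. One then estimates $R$ and $C$ by induction on $\ell$: deleting the last letter of $\vv$ leaves the reversing diagram of an $(\ell-1)$‑pair, and recovering $\DDD$ amounts to pushing one new letter across that smaller diagram; tracing this letter and using the monotonicity of separatrices to bound how many of the already‑drawn separatrices it must cross, and how often, shows that $R$ and $C$ each grow by at most a factor $3$ in this step. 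Consequently $R,C\le C'\cdot 3^{\ell}$ for some constant $C'$, whence $\compl(\uu,\vv)\le RC\le (C')^{2}\,9^{\ell}$, and therefore $\dist(\uu,\vv)\le C\cdot 9^{\ell}$ with $C=(C')^{2}$.

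The reason for routing the argument through separatrices is precisely to keep this multiplicative step small. The cruder bookkeeping obtained by adapting the counting of~\cite[Prop.~3]{Dhg} controls the growth of each of $R$ and $C$ only by a factor $9$ per letter, which yields $9^{\ell}$ for each dimension and hence the weaker estimate $81^{\ell}=(9^{2})^{\ell}$ mentioned above; the separatrix analysis replaces the per‑letter factor $9$ by $3$, i.e.\ turns $81=9^{2}$ into $9=3^{2}$.

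The main obstacle is exactly this inductive step: proving that injecting one additional letter into the reversing diagram multiplies each of its two dimensions by at most $3$. This rests on a delicate description of how the newly injected letter interacts with the existing separatrices — which of them it must cross, and why each such crossing occurs only a bounded number of times — and it is the only point that is not a formal consequence of facts about separatrices already established above; the rest of the proof is routine bookkeeping.
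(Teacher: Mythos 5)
There is a genuine gap, and it sits exactly where the whole difficulty of the proposition lies. First, a point of context: the paper does not actually prove Proposition~\ref{P:UpperRev}; it explicitly skips the argument as ``complicated'' and refers to~\cite{Aut}, so there is no proof in the text to compare yours against. Judged on its own, your proposal is a plan rather than a proof: the entire content of the statement is concentrated in your asserted inductive step that injecting one more letter into the reversing diagram multiplies each of the two grid dimensions by at most a factor of~$3$, and you give no argument for this beyond invoking ``a delicate description of how the newly injected letter interacts with the existing separatrices.'' You acknowledge this yourself, but that step is not a technical detail --- it \emph{is} the theorem. Nothing you establish beforehand supports it: the facts you quote from Section~\ref{S:Sep} and the proof of Proposition~\ref{P:Optimal} (monotonicity of separatrices, tiles as crossings, polynomially many separatrix names) bound the number of tiles only when separatrices cross a bounded number of times, which is precisely what fails in non-optimal reversing diagrams (see Figure~\ref{F:Dist}, where $\SEP14$ and $\SEP23$ cross twice).

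Two further structural worries. (1) The reduction ``$\compl(\uu,\vv)\le RC$, so it suffices to bound $R$ and $C$'' is close to circular: each type~I hexagon adds a row and a column, so $R$ and $C$ are of order $\ell$ plus the number of hexagons, i.e.\ of order $\ell+\compl(\uu,\vv)$; bounding $R$ and $C$ is not easier than bounding $\compl$ itself, it is the same problem. (2) The inductive step is not a single sweep of one letter across the old diagram. When the appended letter of $\vv$ is pushed down the complement $\uu'$, every type~I tile it triggers doubles the horizontal front ($\sss\ii\ss\jj\rev\ss\jj\ss\ii\sss\jj\sss\ii$), and each of the newly created letters must in turn be pushed past the remaining letters of $\uu'$, each such passage possibly doubling again. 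A priori this cascade grows like $2^{|\uu'|}$ per added letter, not like a constant factor, and since $|\uu'|$ can already be of order $\ell$ or worse, the naive estimate is far larger than $3^\ell$ per dimension. Taming this cascade is exactly the ``careful analysis of separatrices'' the paper alludes to and defers to~\cite{Aut}; without it your factor~$3$, and hence the final $9^\ell$, is unsupported.
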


As the argument is complicated and, at the same time, the result seems far from optimal, we
skip the proof and refer to~\cite{Aut} for details.

\begin{rema*}
The index~$\nn$ does not appear in Proposition~\ref{P:UpperRev}. This
reflects the fact that, although the maximal reversing complexity between two
$n$-expressions of length~$\ell$ increases with~$n$ and~$\ell$, it does not increase
indefinitely: if we denote by~$\NN(n,\ell)$ the maximal reversing complexity between two $n$-expressions of length~$\ell$, then, for each~$\ell$, the value
of~$\NN(n,\ell)$ is constant for $n\ge 2\ell$. This is due to the fact that, when
$\nn$ is too large with respect to~$\ell$, the indices of the transpositions~$\ss\ii$
occurring in an $\nn$-expression cannot cover the whole of~$\Int\nn$ and commutation
relations of type~\ref{E:Braid2} occur. Here again, we refer to~\cite{Aut} for more details.
\end{rema*}

\subsection{A lower bound for the reversing complexity}

The $\nn$-expressions used in the proof of Proposition~\ref{P:Lower} to establish the
inequality~\eqref{E:Example} have length $\ell = \nn(\nn-1)/2$ so that, in this way, we
obtain for infinitely many values of~$\ell$ equivalent reduced expressions of length~$\ell$
that satisfy $\dist(\uu, \vv) \ge \ell(\ell-1)/2$, hence, a fortiori,
\begin{equation}
\label{E:LowerBis}
\compl(\uu, \vv) \ge \frac{\ell(\ell-1)}2.
\end{equation}

\begin{ques}
\label{Q:LowerRev}
Can one construct a sequence $(\uu_\ell, \vv_\ell)$ of pairwise equivalent reduced
expressions of length~$\ell$ such that $\compl(\uu_\ell, \vv_\ell)$ is more than quadratic
in~$\ell$?
\end{ques}

We leave Question~\ref{Q:LowerRev} open, but we now
address another related question and establish a result that
illustrates how complicated the reversing process may be.

\begin{prop}
\label{P:Quartic}
For each~$\ell$ there exist expressions $\uu, \vv$ of length~$\ell$ satisfying
$$\compl(\uu, \vv) \ge \frac43 \ell^4$$
for $\ell$ large enough.
\end{prop}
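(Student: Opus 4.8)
The plan is to exhibit, for each~$\ell$, an explicit pair $(\uu,\vv)$ of length-$\ell$ expressions whose reversing diagram carries at least $4\ell^4$ intersection points of separatrices, and then to invoke the bound $\compl(\uu,\vv) \ge X/3$, where $X$ denotes that number of intersection points. This bound is immediate from the local analysis underlying Proposition~\ref{P:MinimalSep}: in any reversing diagram, each type~I tile $\T\pp\qq\rr$ hosts exactly the three mutual crossings of $\SEP\pp\qq$, $\SEP\pp\rr$, $\SEP\qq\rr$, each type~II tile hosts exactly one crossing, and type~III tiles (digons) host none; hence, if the diagram has $N_{\mathrm{I}}$ hexagons and $N_{\mathrm{II}}$ squares, then $X = 3N_{\mathrm{I}} + N_{\mathrm{II}} \le 3(N_{\mathrm{I}} + N_{\mathrm{II}}) = 3\,\compl(\uu,\vv)$. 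So it suffices to produce a pair whose reversing diagram has at least $4\ell^4$ separatrix crossings.

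For the construction I would take $\uu$ and $\vv$ to be short reduced $(2\ell)$-expressions whose join in the weak order of $\Sym_{2\ell}$ is the flip permutation~$\phi_{2\ell}$ (the longest element); concretely, the two ``interleaved combs'' $\uu = \ss1\ss3\ss5 \cdots \ss{2\ell-1}$ and $\vv = \ss2\ss4 \cdots \ss{2\ell-2}$ (padded so that both have length exactly~$\ell$) are natural candidates, since their inversion sets together comprise all pairs $\P\ii{\ii+1}$, whose biconvex closure is the full inversion set of~$\phi_{2\ell}$. By Proposition~\ref{P:Complete}, $\sym\uu\,\vv$ reverses to some $\vv'\sym{\uu'}$ with $\uu\vv'$ and $\vv\uu'$ two reduced expressions of~$\phi_{2\ell}$; in particular the reversing diagram carries exactly one separatrix $\SEP\pp\qq$ for each of the $\binom{2\ell}{2} = 2\ell^2 - \ell$ inversions $\P\pp\qq$ of~$\phi_{2\ell}$.

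The heart of the argument---and the step I expect to be the main obstacle---is to show that in this specific reversing diagram essentially every pair of separatrices crosses \emph{at least twice}, in contrast with an optimal van Kampen diagram of~$\phi_{2\ell}$, in which any two cross once. Here one exploits the rigid geometry recorded in the proof of Proposition~\ref{P:Optimal}: every vertical edge is traversed by a separatrix left-to-right and every horizontal edge bottom-to-top, digons being the only tiles that turn a separatrix from horizontal to vertical and hexagons the only ones that turn it back. Following the reversing process letter by letter---one disentangles the two combs step by step---one should see that each of the roughly $2\ell^2$ separatrices is forced through a ``U-turn'' at a digon, so that any two of them wind around one another and meet at least twice; a careful bookkeeping of these double crossings (together with the crossings occurring inside the $\uu'$- and $\vv'$-parts of the diagram, which only help) yields $X \ge 4\ell^4$ for~$\ell$ large, whence $\compl(\uu,\vv) \ge X/3 \ge \frac43\ell^4$. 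The two delicate points will be \emph{(i)} verifying that the chosen pair really does produce a digon on \emph{every} separatrix rather than only on most of them, so that the leading constant is not eroded by the $O(\ell^2)$ separatrices one might miss, and \emph{(ii)} checking that the negative lower-order terms in $2\binom{2\ell^2-\ell}{2}$, offset by the uncounted crossings, still leave the clean bound $\frac43\ell^4$ valid for all large~$\ell$.
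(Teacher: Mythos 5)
Your general inequality $\compl(\uu,\vv)\ge X/3$, where $X$ denotes the total number of separatrix crossings in the reversing diagram, is correct (each hexagon hosts three crossings, each square one, each digon none, so $X=3N_{\mathrm I}+N_{\mathrm{II}}\le 3\,\compl(\uu,\vv)$), and your candidate pair of interleaved combs is, up to relabelling and reflection, exactly the pair $\uu_\ell=\ss{2\ell}\ss{2\ell-2}\cdots\ss2$, $\vv_\ell=\ss1\ss3\cdots\ss{2\ell-1}$ used in the paper. But the mechanism you propose for lower-bounding $X$ is not just unproven (you flag it yourself as the main obstacle) --- it is false for this example. You claim that digons force each separatrix through a U-turn so that any two separatrices ``wind around one another and meet at least twice.'' The paper establishes the opposite: by Lemma~\ref{L:Quartic1} and Remark~\ref{R:Optimal}, the (compacted) reversing diagram for $(\uu_\ell,\vv_\ell)$ contains \emph{no} digons, hence by the argument of Proposition~\ref{P:OptimalBis} any two separatrices cross \emph{at most once}, and the diagram is an optimal van Kampen diagram for its boundary words. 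The quartic growth does not come from double crossings at all: the boundary words have length $\Theta(\ell^2)$, so there are $\Theta(\ell^2)$ separatrices and $\Theta(\ell^4)$ pairs of them, and a constant fraction of these pairs cross exactly once, almost always inside a type~II square (the hexagon count is only $O(\ell^3)$), which is why the constant $8/6=4/3$ appears with no factor-of-three loss. Note also that your heuristic is unsound on its own terms: whether $\SEP\pp\qq$ and $\SEP\ppp\qqq$ must meet an even or odd number of times is dictated by the order of their endpoints on the boundary, and extra crossings beyond that forced minimum are never imposed by the mere presence of a digon somewhere on one of the two curves.

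What the paper actually does is compute $\compl(\uu_\ell,\vv_\ell)$ exactly and from the bottom up. Lemma~\ref{L:Quartic1} establishes two reversing identities,
$\sym{\eb_{i,p}}\,\ea_{i+1,p}\revk{\Nba_p}\ea_{i,p+1}\,\sym{\eb_{i,p+1}}$ with $\Nba_p=p^2+p-1$ and
$\sym{\ed_{i,p}}\,\ec_{i+2,p}\revk{\Ndc_p}\ec_{i,p+2}\,\sym{\ed_{i,p+2}}$ with $\Ndc_p=4p^2+8p-3$,
and the full reversing of $\sym{\uu_\ell}\vv_\ell$ decomposes into $O(\ell^2)$ initial elementary steps followed by $\ell(\ell-1)/2$ applications of the second identity arranged in a triangular array (Figure~\ref{F:Quartic}); summing the $\Ndc_p$ gives $(8\ell^4-23\ell^2+9\ell+12)/6$ nontrivial steps. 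If you insist on arguing via separatrices, the workable route is the reverse of yours: verify the diagram is digon-free, conclude optimality, and then bound $\dist$ of the length-$\Theta(\ell^2)$ boundary words from below by $\II_3+\II_{2,2}$ as in Proposition~\ref{P:Lower} --- but identifying those boundary words already requires essentially the same recursive computation, so nothing is saved. As it stands, your proof has no valid argument producing the lower bound on $X$, and the stated source of that bound cannot be repaired for the chosen pair.
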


We begin with an auxiliary lemma. Hereafter  we write $w \revk{k} w'$ if there is a
length~$\kk$ reversing sequence from~$w$ to~$w'$, \emph{not} counting trivial steps of
type~III.

\begin{lemm}
\label{L:Quartic1}
For $\ii, \pp \ge 1$, put 
$\ea_{i,p} = \ss{i+p-1}\ss{i+p-2} ... \ss{i}$, $\eb_{i,p} = \ss\ii\ss{i+1} ... 
\ss{i+p-1}$, $\ec_{\ii,\pp} = \ea_{i,p} \ea_{i+1,p}$, and 
$\ed_{\ii,\pp} = \eb_{i+1,p} \eb_{i,p}$. Then, for all $i,
p$, we have 
\begin{gather}
\label{E:BA}
\sym{\eb_{i,p}} \; \ea_{i+1,p}  \revk{\Nba_\pp}
\ea_{i,p+1} \; \sym{\eb_{i,p+1}},\\
\label{E:DC}
\sym{\ed_{i,p}}  \; \ec_{i+2,p} 
\revk{\Ndc_\pp}
\ec_{i,p+2}  \; \sym{\ed_{i,p+2}},
\end{gather}
with $\Nba_\pp =p^2+p-1$ and $\Ndc_\pp =4p^2+8p-3$.
\end{lemm}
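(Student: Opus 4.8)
The plan is to prove the two reversing identities \eqref{E:BA} and \eqref{E:DC} by a direct, hands-on analysis of the subword reversing process, using induction on the parameter~$\pp$. The key observation is that both $\ea_{i,p}$, $\eb_{i,p}$ (and hence $\ec_{i,p}$, $\ed_{i,p}$) are reduced expressions with a very rigid staircase shape, so the reversing of $\sym{\eb_{i,p}}\,\ea_{i+1,p}$ proceeds in a completely deterministic way: at each stage there is essentially one relevant subword $\sss k \ss\ell$ to reverse, and one can track the evolution of the extended expression letter by letter. Concretely, I would first establish the base case $\pp = 1$: here $\ea_{i,1} = \eb_{i,1} = \ss\ii$, the word $\sss\ii\,\ss{\ii+1}$ reverses by a single type~I step to $\ss{\ii+1}\ss\ii\sss{\ii+1}\sss\ii$, i.e.\ to $\ea_{i,2}\,\sym{\eb_{i,2}}$, and indeed $\Nba_1 = 1$. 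For \eqref{E:DC} with $\pp = 1$ one checks by hand that the analogous reversing takes $4\cdot1+8-3 = 9$ nontrivial steps.

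For the inductive step on \eqref{E:BA}, I would peel off one layer of the staircase. Write $\sym{\eb_{i,p}} = \sss{\ii+\pp-1}\,\sym{\eb_{i,p-1}}$ and $\ea_{i+1,p} = \ea_{i+1,p-1}\,\ss{\ii+1}$ (or the analogous decomposition that isolates a single outermost letter), push the newly exposed $\sss\ii$-type letter through, and recognize that what remains is an instance of \eqref{E:BA} at parameter $\pp-1$ together with a bounded number of extra type~I and type~II steps coming from the interaction of the peeled letter with the $\pp-1$ letters it must cross. Counting these extra steps should give the recursion $\Nba_\pp = \Nba_{\pp-1} + 2\pp - 1$, which solves to $\Nba_\pp = \pp^2 + \pp - 1$ once the base case is pinned down; the $2\pp-1$ comes from one letter crossing roughly $\pp$ others, half of the crossings being type~I braid moves and half type~II commutations, with the exact bookkeeping of who is adjacent to whom being the place where care is needed. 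The identity \eqref{E:DC} is then obtained by a similar but longer computation: since $\ec_{i,p} = \ea_{i,p}\ea_{i+1,p}$ and $\ed_{i,p} = \eb_{i+1,p}\eb_{i,p}$ are concatenations of two staircases, reversing $\sym{\ed_{i,p}}\,\ec_{i+2,p}$ decomposes into four ``$\eb$ against $\ea$'' sub-reversings of the type covered by \eqref{E:BA} (at parameters $\pp$ and $\pp+1$), plus cross terms; adding up $\Nba_\pp + \Nba_{\pp+1} + \Nba_{\pp} + \Nba_{\pp+1}$ and the cross terms should reproduce $\Ndc_\pp = 4\pp^2 + 8\pp - 3$. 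Alternatively one proves \eqref{E:DC} directly by the same layer-peeling induction with recursion $\Ndc_\pp = \Ndc_{\pp-1} + 8\pp$.

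The main obstacle I anticipate is purely combinatorial bookkeeping: making the ``deterministic reversing'' claim precise enough to be trustworthy, i.e.\ writing down an explicit closed form for the intermediate extended expressions $\ww_k$ (as concatenations of $\ss{}$-blocks and $\sss{}$-blocks with controlled indices) and verifying that each reversing step does exactly what the formula predicts, including that no unexpected type~III cancellations occur before the very end. Once that invariant is set up correctly, the step counts fall out as telescoping sums and the verification of the formulas $\Nba_\pp = \pp^2+\pp-1$ and $\Ndc_\pp = 4\pp^2+8\pp-3$ is routine induction; the identities $\sym{\eb_{i,p}}\,\ea_{i+1,p} \revk{\Nba_\pp} \ea_{i,p+1}\,\sym{\eb_{i,p+1}}$ and its counterpart for $\ec,\ed$ are then read off from the final term of the reversing sequence. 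I would present the $\pp=1$ case and the index pattern of one representative intermediate word explicitly, and relegate the remaining arithmetic to a short induction, since grinding through every step for general $\pp$ adds length without insight.
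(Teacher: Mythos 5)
Your overall strategy coincides with the paper's: \eqref{E:BA} is proved by peeling one layer off the staircase and inducting on~$\pp$, and \eqref{E:DC} is reduced to applications of~\eqref{E:BA} plus commutation steps. But the content of the lemma is the \emph{exact} counts $\Nba_\pp$ and $\Ndc_\pp$ (they feed directly into the constant of Proposition~\ref{P:Quartic}), and the bookkeeping you propose cannot produce them. The recursion $\Nba_\pp=\Nba_{\pp-1}+2\pp-1$ with $\Nba_1=1$ solves to $\Nba_\pp=\pp^2$, not $\pp^2+\pp-1$, and no choice of base case repairs this, since the correct first difference of $\pp^2+\pp-1$ is $2\pp$. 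The actual count, from the decomposition $\sym{\eb_{\ii,\pp}}\,\ea_{\ii+1,\pp}=\sss{\ii+\pp-1}\,\sym{\eb_{\ii,\pp-1}}\,\ss{\ii+\pp}\,\ea_{\ii+1,\pp-1}$, is: $\pp-1$ type~II steps to push $\ss{\ii+\pp}$ leftward (its index differs by at least~$2$ from every letter of $\sym{\eb_{\ii,\pp-1}}$), then $\Nba_{\pp-1}$ steps by the induction hypothesis, then exactly \emph{one} type~I step (plus one uncounted type~III step, absorbable into a type~I' tile) and $\pp$ further type~II steps; so $\Nba_\pp=\Nba_{\pp-1}+2\pp$, and per layer the split is one type~I versus $2\pp-1$ type~II --- not ``half and half'' as you suggest.

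For \eqref{E:DC} the same problem recurs: your decomposition into sub-reversings at parameters $\pp$ and $\pp+1$ already gives $2\Nba_\pp+2\Nba_{\pp+1}=4\pp^2+8\pp$ before any cross terms, which exceeds $\Ndc_\pp=4\pp^2+8\pp-3$, so there is no room for nonnegative cross terms. The correct decomposition applies \eqref{E:BA} four times \emph{all at parameter~$\pp$} (after each application one splits off the newly created outer letter, e.g.\ $\ea_{\ii+1,\pp+1}=\ss{\ii+\pp+1}\,\ea_{\ii+1,\pp}$, and commutes it away before the next application), together with $4\pp+1$ type~II steps, giving $4(\pp^2+\pp-1)+4\pp+1=4\pp^2+8\pp-3$. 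Your fallback recursion $\Ndc_\pp=\Ndc_{\pp-1}+8\pp$ is likewise off (the correct first difference is $8\pp+4$), and since \eqref{E:DC} relates parameter~$\pp$ to $\pp+2$, a step-one recursion is not the natural induction anyway. These are fixable bookkeeping errors rather than a wrong idea, but as written the plan does not verify the stated formulas.
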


\begin{figure}[htb]
\begin{picture}(80,72)(0,4)
\put(0,0){\includegraphics{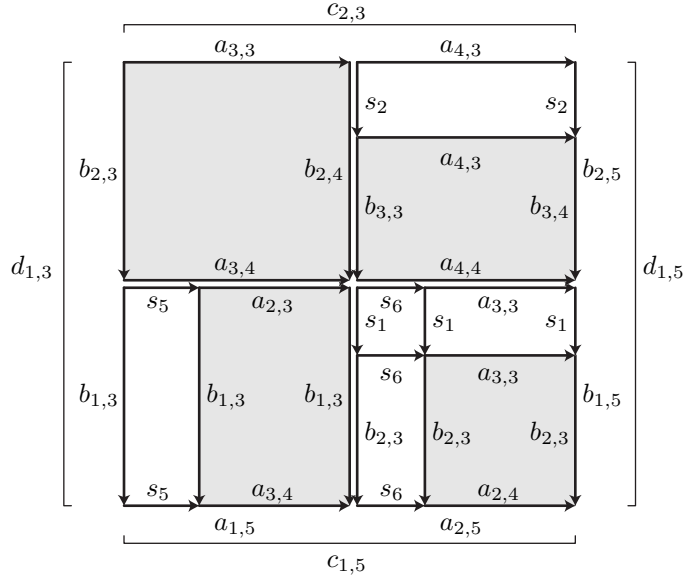}}
\put(4,54){$\eb_{2,3}$}
\put(34,54){$\eb_{2,4}$}
\put(4,24){$\eb_{1,3}$}
\put(71,54){$\eb_{2,5}$}
\put(71,24){$\eb_{1,5}$}
\put(-5,41){$\ed_{1,3}$}
\put(79,41){$\ed_{1,5}$}
\put(22,7){$\ea_{1,5}$}
\put(52,7){$\ea_{2,5}$}
\put(37,2){$\ec_{1,5}$}
\put(22,70.5){$\ea_{3,3}$}
\put(52,70.5){$\ea_{4,3}$}
\put(22,41.5){$\ea_{3,4}$}
\put(52,55.5){$\ea_{4,3}$}
\put(52,41.5){$\ea_{4,4}$}
\put(37,75.5){$\ec_{2,3}$}
\put(42,49){$\eb_{3,3}$}
\put(64,49){$\eb_{3,4}$}
\put(42,63){$\ss2$}
\put(66,63){$\ss2$}
\put(13,36.5){$\ss5$}
\put(27,36.5){$\ea_{2,3}$}
\put(13,11.5){$\ss5$}
\put(21,24){$\eb_{1,3}$}
\put(34,24){$\eb_{1,3}$}
\put(27,11.5){$\ea_{3,4}$}
\put(42,34){$\ss1$}
\put(51,34){$\ss1$}
\put(66,34){$\ss1$}
\put(42,19){$\eb_{2,3}$}
\put(51,19){$\eb_{2,3}$}
\put(64,19){$\eb_{2,3}$}
\put(44,36.5){$\ss6$}
\put(44,27){$\ss6$}
\put(44,11.5){$\ss6$}
\put(57,36.5){$\ea_{3,3}$}
\put(57,27){$\ea_{3,3}$}
\put(57,11.5){$\ea_{2,4}$}
\end{picture}
\caption{\sf Proof of Relation~\ref{E:DC}, here with $\ii = 1$ and $\pp = 3$; the grey
rectangles correspond to Relation~\ref{E:BA}.}
\label{F:Quadratic}
\end{figure}

\begin{proof}
For~\eqref{E:BA} we use induction on~$p$. The
case $p=1$ is
$$\sym \eb_{i,1}  \; \eb_{i+1,1} \revk{1} \ea_{i,2}  \; \sym{\eb_{i,2}},$$
a restatement of  $\sss\ii \ss{i+1} \revk{1} \ss{i+1}\ss{i}\sss{i+1}\sss{i}$.
Assume $p \ge 2$. Applying the induction hypothesis once, plus one
reversing step of type~I and one step of type~III---or one type~I'
step instead---and $2p-1$~steps of type~II, we obtain
\begin{alignat*}{4}
\sym{\eb_{i,p}} \; \ea_{i+1,p} 
&=
&&\sss{i+p-1} \; \BOLD{\sym{\eb_{i,p-1}} \;  \ss{i+p}} \; \ea_{i+1,p-1}\\
&\revk{p-1}
&&\sss{i+p-1} \; \ss{i+p} \; \BOLD{\sym{\eb_{i,p-1}} \; \ea_{i+1,p-1}}\\
&\revk{\Nba_{\pp-1}}\ 
&&\BOLD{\sss{i+p-1} \; \ss{i+p}}  \; \ea_{i,p} \; \sym{\eb_{i,p}}\\
&\revk1
&&\ss{i+p} \; \ss{i+p+1} \; \sss{i+p} \; \BOLD{\sss{i+p+1} \; \ea_{i+1,p}} \;
\sym{\eb_{i,p}}\\ 
&\revk0
&&\ss{i+p} \; \ss{i+p+1} \; \BOLD{\sss{i+p} \; \ea_{i+1,p-1}} \;
\sym{\eb_{i,p}}\\ 
&\revk{p}
&&\ss{i+p} \; \ss{i+p+1} \; \ea_{i+1,p-1} \; \sss{i+p}\; \sym{\eb_{i,p}}
= \ea_{i,p+1}  \; \sym{\eb_{i,p+1}},
\end{alignat*}
where, in each case, the factors that are about to be reversed are marked in bold. We
deduce $\Nba_{p}=\Nba_{p-1}+2p = p^2+p-1$.

The computation for~\eqref{E:DC} is illustrated in Figure~\ref{F:Quadratic}. Using
\eqref{E:BA} four times, plus
$4p+1$ type~II steps, we obtain:
\begin{alignat*}{4}
\sym{\ed_{i,p}}  \; \ec_{i+2,p}
&=
&&\sym{\eb_{i,p}} \; \BOLD{\sym{\eb_{i+1,p}} \; \ea_{i+2,p}} \; \ea_{i+3,p} \\  
&\revk{\Nba_p}  
&&\sym{\eb_{i,p}}  \; \ea_{i+1,p+1}  \; \sym{\eb_{i+1,p+1}} \;
\ea_{i+3,p}\\  
&= 
&&\BOLD{\sym{\eb_{i,p}} \; \ss{i+p+1}}  \; \ea_{i+1,p}   \; \sym{\eb_{i+1,p+1}}
\; \ea_{i+3,p}\\  
& \revk{p}\ 
&&\ss{i+p+1}  \; \BOLD{\sym{\eb_{i,p}} \; \ea_{i+1,p} }  \; \sym{\eb_{i+1,p+1}}
\; \ea_{i+3,p}\\ 
& \revk{\Nba_{p}}\ 
&&\ss{i+p+1}  \; \ea_{i,p+1}  \; \sym{\eb_{i,p+1}}  \; 
\sym{\eb_{i+1,p+1}}  \; \ea_{i+3,p}\\
&= 
&&\ea_{i,p+2}  \; \sym{\eb_{i,p+1}}  \; 
\sym{\eb_{i+2,p}} \; \BOLD{\sss{\ii+1}  \; \ea_{i+3,p}}\\
& \revk{p}
&&\ea_{i,p+2}  \; \sym{\eb_{i,p+1}}  \; 
\BOLD{\sym{\eb_{i+2,p}}  \; \ea_{i+3,p}} \; \sss{\ii+1} \\
& \revk{\Nba_{p}}
&&\ea_{i,p+2}  \; \sym{\eb_{i,p+1}}  \; \ea_{i+3,p+1}
 \; \sym{\eb_{i+3,p+1}} \;  \sss{i+1}\\
&= 
&&\ea_{i,p+2}  \; \sym{\eb_{i+1,p}}  \BOLD{\; \sss{i} \; \ss{i+p+2} \;
\ea_{i+2,p}} \; \sym{\eb_{i+2,p+2}} \\
& \revk{p+1}\  
&&\ea_{i,p+2} \;  \BOLD{\sym{\eb_{i+1,p}}   \; \ss{i+p+2}} \; \ea_{i+2,p} 
 \; \sss{i}  \; \sym {\eb_{i+1,p+1}}\\
& \revk{p} 
&&\ea_{i,p+2}  \; \ss{i+p+2} \;  \BOLD{\sym{\eb_{i+1,p}}  \; \ea_{i+2,p}} 
 \; \sss{i}  \; \sym {\eb_{i+1,p+1}}\\
& \revk{\Nba_p} 
&&\ea_{i,p+2}  \; \ss{i+p+2} \;  \ea_{i+1,p+1}  \; \sym{\eb_{i+1,p+1}}
 \; \sss{i}  \; \sym{\eb_{i+1,p+1}}
= \ec_{i,p+2}  \; \sym{\ed_{i,p+2}},
\end{alignat*}
leading to $\Ndc_\pp =4p^2+8p-3$.
\end{proof}

We can now establish Proposition~\ref{P:Quartic}.

\begin{proof}[Proof of Proposition~\ref{P:Quartic}]
(See Figure~\ref{F:Quartic}.)
We put
$$u_{\ell}=\ss{2\ell}\ss{2\ell-2} ... \ss{2}
\mbox{\quad and \quad}
v_{\ell}=\ss{1}\ss{3} ... \ss{2\ell-1},$$
and analyze the reversing of $\sym u_{\ell}v_{\ell}$.
The latter consists of three sequences of elementary
steps. First, $\ell(\ell-2)/2$ steps of type~II lead to 
$$\sss2 \; \ss1 \; \sss4 \;  \ss3 \; ... \; \sss{2\ell} \; \ss{2\ell-1}.$$
Then, $\ell$ type~I steps lead to
$\ss1\ss2\sss1\sss2 \; \ss3\ss4 \sss3\sss4 \; ...\;
\ss{2\ell-1}\ss{2\ell}\sss{2\ell-1}\sss{2\ell}$, which is
$$\ec_{1,1} \; \sym{\ed_{2,1}} \; 
\ec_{3,1} \; \sym{\ed_{4,1}} \;  ... \; 
\ec_{2\ell-1,1} \; \sym{\ed_{2\ell,1}}.$$
From there, we apply~\eqref{E:DC} repeatedly: after $\ell-1$ applications, we obtain
$$\ec_{1,1} \cdot \ec_{1,3} \; \sym{\ed_{2,3}} \; 
\ec_{3,3} \; \sym{\ed_{4,3}} \;  ... \; 
\ec_{2\ell-3,3} \; \sym{\ed_{2\ell-2,3}}  \cdot 
\sym{\ed_{2\ell,1}};$$
after $\ell-2$ more applications, we obtain
$$\ec_{1,1} \; \ec_{1,3}  \cdot  \ec_{1,5} \; \sym{\ed_{2,5}} \; 
\ec_{3,5} \; \sym{\ed_{4,3}} \;  ... \; 
\ec_{2\ell-5,5} \; \sym{\ed_{2\ell-4,5}}  \cdot  \sym{\ed_{2\ell-2,3}} \;
\sym{\ed_{2\ell,1}},$$
and so on. After using~\eqref{E:DC} $\ell(\ell-1)/2$~times, we finally obtain 
$$\ec_{1,1} \; \ec_{1,3} ... \ec_{1, 2\ell+1} \; \sym{\ed_{2,2\ell-1}} \; 
\sym{\ed_{4,2\ell-3}} \;  ... \; 
\sym{\ed_{2\ell-2,3}} \;
\sym{\ed_{2\ell,1}}.$$
A careful bookkeeping shows that the total number of reversing steps involved in the
process is $(8\ell^4-23\ell^2+9\ell+12)/6$, hence $\Theta(\ell^4)$ as announced.
\end{proof}

\begin{figure}[htb]
\begin{picture}(60,42)(0,-1)
\put(-0.3,0){\includegraphics{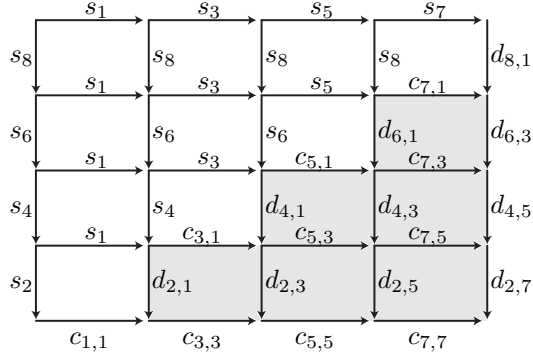}}
\put(7,42){$\ss1$}
\put(22,42){$\ss3$}
\put(37,42){$\ss5$}
\put(52,42){$\ss7$}

\put(7,32){$\ss1$}
\put(22,32){$\ss3$}
\put(37,32){$\ss5$}
\put(50,32){$\ec_{7,1}$}

\put(7,22){$\ss1$}
\put(22,22){$\ss3$}
\put(35,22){$\ec_{5,1}$}
\put(50,22){$\ec_{7,3}$}

\put(7,12){$\ss1$}
\put(20,12){$\ec_{3,1}$}
\put(35,12){$\ec_{5,3}$}
\put(50,12){$\ec_{7,5}$}

\put(5,-2){$\ec_{1,1}$}
\put(20,-2){$\ec_{3,3}$}
\put(35,-2){$\ec_{5,5}$}
\put(50,-2){$\ec_{7,7}$}

\put(-3,5.5){$\ss2$}
\put(-3,15.5){$\ss4$}
\put(-3,25.5){$\ss6$}
\put(-3,35.5){$\ss8$}

\put(16,5.5){$\ed_{2,1}$}
\put(16,15.5){$\ss4$}
\put(16,25.5){$\ss6$}
\put(16,35.5){$\ss8$}

\put(31,5.5){$\ed_{2,3}$}
\put(31,15.5){$\ed_{4,1}$}
\put(31,25.5){$\ss6$}
\put(31,35.5){$\ss8$}

\put(46,5.5){$\ed_{2,5}$}
\put(46,15.5){$\ed_{4,3}$}
\put(46,25.5){$\ed_{6,1}$}
\put(46,35.5){$\ss8$}

\put(61,5.5){$\ed_{2,7}$}
\put(61,15.5){$\ed_{4,5}$}
\put(61,25.5){$\ed_{6,3}$}
\put(61,35.5){$\ed_{8,1}$}
\end{picture}
\caption{\sf Proof of Proposition~\ref{P:Quartic}, here for $\ell = 4$; each grey rectangle
corresponds to applying Relation~\eqref{E:DC}, hence contains a number of elementary
tiles that lies in~$O(\ell^2)$.}
\label{F:Quartic}
\end{figure}

\begin{rema}
\label{R:Optimal}
At the expense of using one type~I' tile for the proof of
Relation~\ref{E:BA}, no type~III tile is used thoughout the above constructions. Using
Proposition~\ref{P:OptimalBis}, we conclude that the reversing diagram we obtained
gives an optimal van Kampen diagram, \ie, it realizes the combinatorial distance between the
boundary words, here $\uu_\ell \, \ec_{1,1} \, \ec_{1,3}\, ... \,\ec_{1, 2\ell+1}$ and
$\vv_\ell \, \ed_{2\ell, 1} \, \ed_{2\ell-2, 3} \, ..., \ed_{4,2\ell-3} \, \ed_{2,2\ell-1}$.
\end{rema}

With Proposition~\ref{P:Quartic}, we prove that~$\compl(u,v)$  can be
quartic in the length of~$u$ and~$v$. We conjecture this lower bound is
also an upper bound, but have no proof of this result so far. 
The problem is that we have no
control on the number of hexagons and digons that may
occur in a reversing diagram. There is a quadratic upper
bound on the lengths of the final expressions~$\uu',
\vv'$ that may arise from some initial pair~$(\uu,\vv)$ of length~$\ell$
expressions, but this does not directly lead to a bound on the number of
type~I reversing steps used (the only ones that increase the length)
because some subsequent type~III steps might erase the letters so created.

\begin{rema}
As mentioned in the introduction, most results of this paper extend to positive braids. For
instance, the optimality criterion of Proposition~\ref{P:MinimalSep} extends to positive
braids at the expense of adding a notion of rank in the definition of separatrices: in the
braid diagram associated with a permutation, \ie, with a simple braid, any two strands cross
at most once, and we introduce one separatrix~$\SEP\pp\qq$ only. For the case of
arbitrary positive braids, we should introduce several separatrices for pairs of strands that
cross more than one time, $\SEP\pp\qq^{(\kk)}$ being associated with the $\kk$th
intersection of the strands~$\pp$ and~$\qq$. As for subword reversing, it works in the
general braid case exactly as in the case of simple braids, \ie, of permutations. Experiments
show that the worst cases in terms of complexity arise with simple braids. So we
have no better result in the general braid case than in the particular permutation case.
\end{rema}

\end{document}